\DeclareMathOperator{\dif}{\text{\normalfont d}}
\DeclareMathOperator{\FS}{FS}
\DeclareMathOperator{\Gr}{Gr}
\DeclareMathOperator{\supp}{supp}
\DeclareMathOperator{\ord}{ord}
\def\log{\mathrm{log}\,}
\theoremstyle{plain}
\newtheorem{thm}{Theorem}[section]  
\newtheorem{cor}[thm]{{Corollary}} 
\newtheorem{lem}[thm]{{Lemma}}
\newtheorem{pro}[thm]{Proposition}
\newtheorem{rem}[thm]{Remark}
\newtheorem*{claim}{Claim}
\theoremstyle{remark}
\newtheorem{rmk}[thm]{Remark}
\numberwithin{equation}{section}
\theoremstyle{plain}
\newcommand{\thistheoremname}{}
\newtheorem*{genericthm*}{\thistheoremname}
\newenvironment{namedthm*}[1]{\renewcommand{\thistheoremname}{#1}%
	\begin{genericthm*}}
	{\end{genericthm*}}
\newtheoremstyle{named}{}{}{\itshape}{}{\bfseries}{.}{.5em}{\thmnote{#3's }#1}
\theoremstyle{named}
\newcommand\thankssymb[1]{\textsuperscript{\@fnsymbol{#1}}}
\begin{document} 
	\title[Some variants of the generalized Borel Theorem and applications]{\bf Some variants of the generalized Borel Theorem\\ and applications}

	\subjclass[2010]{32H25, 32H30}
	\keywords{holomorphic map, algebraic degeneracy, Nevanlinna theory, complex hyperbolicity, Borel Theorem,  Fermat-Warning hypersurface}
	
	\author{Dinh Tuan Huynh\\
	Hue Geometry--Algebra Group}
	
	\address{Hue Geometry--Algebra Group, Department of Mathematics, University of Education, Hue University, 34 Le Loi St., Hue City, Vietnam}
	\email{dinhtuanhuynh@hueuni.edu.vn}

\begin{abstract} In the first part of this paper, we establish some results around generalized Borel's Theorem. As an application, in the second part, we construct example of smooth surface of degree $d\geq 19$ in $\mathbb{CP}^3$ whose complements is hyperbolically embedded in $\mathbb{CP}^3$. This improves the previous construction of Shirosaki where the degree bound $d=31$ was gave. In the last part, for a Fermat-Waring type hypersurface $D$ in $\mathbb{CP}^n$ defined by the homogeneous polynomial
\[
\sum_{i=1}^m h_i^d,
\]
where  $m,n,d$ are positive integers with $m\geq 3n-1$ and $d\geq m^2-m+1$, where $h_i$ are homogeneous generic linear forms on $\mathbb{C}^{n+1}$, for a {\sl nonconstant} holomorphic function $f\colon\mathbb{C}\rightarrow\mathbb{CP}^n$ whose image is not contained in $\supp D$, we establish a Second Main Theorem type estimate:
\[
\big(d-m(m-1)\big)\,T_f(r)\leq N_f^{[m-1]}(r,D)+S_f(r).
\]
This quantifies the hyperbolicity result due to Shiffman-Zaidenberg and Siu-Yeung.
\end{abstract}
\maketitle

\section{Introduction}
It was conjectured by Kobayashi in 1970 \cite{Kobayashi1970}  that a general hypersurface $D$ in projective space $\mathbb{CP}^n$ of degree $d$ large  enough is hyperbolic. According to Zaidenberg \cite{Zaidenberg1987}, the expected optimal degree bound 
should be $d=2n-1$. In the so-called {\it logarithmic} case, it is also anticipated that if $d\geq 2n+1$, then the complement $\mathbb{CP}^n\setminus D$ is also  hyperbolic. The subject of hyperbolicity has attracted much attention and research, partly because it has been believed to be intimately related to Diophantine Geometry. For instance, Lang conjectured that an algebraic variety $V_{\mathbb{K}}$ defined over a number field $\mathbb{K}$ can contain only finitely many $\mathbb{K}$-rational points
provided that $V_{\mathbb{C}}$ is  hyperbolic after some base change $\mathbb{K}\hookrightarrow\mathbb{C}$.

Many works have been done during recent decades towards the above conjectures. Notably, these  conjecture were confirmed under the condition that the degree of $D$ is very high compared with the dimension. In the case of surfaces in $\mathbb{CP}^3$, by studying the entire leaves of foliations on surfaces, the first proof was given by McQuillan \cite{Mcquillan1999} with degree bound $d\geq 36$. Demailly-El Goul \cite{Demailly_Goul2000} provided a better degree bound $d\geq 21$. P\v{a}un \cite{mihaipaun2008} employed the technique of using slanted vector field of Siu \cite{Siu2004} together with the work of McQuillan \cite{Mcquillan1998} to improve the degree bound to $d\geq 18$. Subsequently, hyperbolicity of generic three-folds in $\mathbb{CP}^4$ was confirmed \cite{Rousseau2007, Diverio-Trapani2010} with degree bound $d\geq 593$.

In the case of arbitrary dimension $n$, by generalizing the variational approach of Clemens \cite{Clemens1986} and Voisin \cite{Voisin1996}, Siu \cite{Siu2004} outlined a strategy which finally led to the proof of Kobayashi's conjecture \cite{Siu2015} with very high degree bound $d(n)\gg 1$. Many works are influenced by Siu's program, especially the significant results of Diverio-Merker-Rousseau \cite{DMR2010} on Green-Griffiths' conjecture (see also \cite{Demailly2012, Lionel2016}). Recently, based on the method of using Wronskian differential operators \cite{Xie2018, Brotbek-Lionel2017}, Brotbek gave a new proof for hyperbolicity of high degree generic hypersurface
\cite{Brotbek2017}. The explicit degree bound was given shortly afterwards \cite{YaDeng2017, Demailly2018}. Very recently, B\'{e}rczi-Karwan \cite{Berczi-Kirwan2024} reached the degree bound of
polynomial growth by an approach from geometric invariant theory.

On the other side, in the past five decades, it has been considered a very challenging
problem to construct new examples of hyperbolic hypersurfaces of low degree in projective spaces, as the motto goes: ``the lower degree, the more difficult the hyperbolicity''. The first example of compact hyperbolic manifold due to Brody-Green \cite{Brody-Green1977} is a surface in $\mathbb{CP}^3$ defined by the equation

\[
z_0^d+z_1^d+z_2^d+z_3^d+ (\epsilon\,z_0z_1)^{\frac{d}{2}}+(\epsilon\,z_0z_2)^{\frac{d}{2}}
=
0,
\]
where $d=2k\geq 50$ and $|\epsilon|$ is small enough. After that several hyperbolic hypersurfaces in low dimension were constructed to improve the degree bound \cite{Nadel1989, Elgoul96, siu_yeung1997, Shirosaki2000-01, Shirosaki2000-02,  Fujimoto2001, shiffman_zaidenberg2002_p3, shiffman_zaidenberg_2000, Shiffman_Zaidenberg2005, Zaidenberg_3fold2003, Duval2004}. Duval \cite{Duval2004} constructed a sextic hyperbolic surface in $\mathbb{CP}^3$, which is the lowest degree found up to date. Some recent constructions could also reach this degree bound \cite{Ciliberto_Zaidenberg2013, Duval2017}. It is still not known yet whether there exists any quintic hyperbolic surface.

Currently, there are two main methods of constructing hyperbolic hypersurfaces in projective space. The first one is to seek them among the class of perturbations of Fermat hypersurface. This method mades use some variants of the generalized Borel Theorem to study the degeneracy of entire holomorphic curves. The second one is the deformation method introduced by Zaidenberg \cite{Zaidenberg1988}, whose main idea is to find hyperbolic hypersurfaces in the linear pencil of hypersurfaces  $\Sigma_{\epsilon}=\{s_0+\epsilon s=0\}$, where $S_0=\{s_0=0\}$ is a singular hypersurface, $S=\{s=0\}$ is a generic hypersurface and $\epsilon$ is small enough.

The first examples in any dimension were given by Masuda-Noguchi \cite{Masuda_Noguchi1996} with very large degree. Explicit constructions with large degree were gave by Fujimoto \cite{Fujimoto2001,Fujimoto2003}. Some constructions with lower degree asymptotic were provided  by Siu-Yeung \cite{siu_yeung1997} with $d(n)\geq 16\,(n-1)^2$ and by Shiffman-Zaidenberg \cite{shiffman_zaidenberg2002_pn} with $d(n)\geq 4\,(n-1)^2$. Currently, the best asymptotic degree bound is $d\geq {\textstyle{\big(\frac{n+2}{2}\big)^2}}$, obtained in \cite{Huynh2016}. In low dimension $3\leq n\leq 6$, example of hyperbolic hypersurface of lowest degree bound $d=2n$ was  given in \cite{Huynh2015}.

On the other hand,  example of smooth algebraic curve in $\mathbb{CP}^2$ of degree $d\geq 5$ whose complement is hyperbolically embedded was constructed by Zaidenberg \cite{Zaidenberg1988} using deformation. However,  in the logarithmic case, from dimension $n\geq 3$, this method is not effective. Examples of hyperbolically embedded hypersurfaces  were provided by Masuda-Noguchi \cite{Masuda_Noguchi1996} in all dimension, and by Shirosaki \cite{Shirosaki2000-01} in low dimension $n\leq 4$ with better degree bound. Generalizing the construction of Shirosaki to arbitrary dimension, Shiffman-Zaidenberg showed the existence of hyperbolically embedded hypersurfaces in $\mathbb{CP}^n$ with all degree $d\geq 4n^2-2n+1$.

Our first aim in this paper is to  improve the previous degree bound $d\geq 31$ for hyperbolically embedded surfaces in $\mathbb{CP}^3$ by Shirosaki \cite{Shirosaki2000-01}. We first prove a Borel type result for holomorphic curves into projective space avoiding or contained in the hypersurface defined by the homogeneous polynomials of the form 
\[
\sum_{i=0}^{n}z_i^{d-\delta_i}Q_i.
\]
Similar results have been given in \cite{Nadel1989}, \cite{Elgoul96} \cite{Tiba2012} using meromorphic connections (see also \cite{Yang2017}, \cite{Thin2017} for another approach via value distribution theory). Following the arguments of Green, we obtain a stronger degeneracy statement (see Section \ref{section: preparation} for details). As an application, we construct a smooth algebraic surface in $\mathbb{CP}^3$ of degree $d$ for any $d\geq 19$ such that its complement is hyperbolically embedded in $\mathbb{CP}^3$. 

\begin{namedthm*}{Theorem A}
	\label{theorem B}
Let $d\geq 19$ be an integer. There exist some nonzero complex numbers $a_0,a_1,a_2,a_3$ such that  the surface $D\subset\mathbb{CP}^3$  defined in the homogeneous coordinates $[z_0:z_1:z_2:z_3]$ of $\mathbb{CP}^3$ by the homogeneous polynomial
\[
z_0^{d}+z_1^{d-2}(z_1^2+a_0z_0^2)+z_2^{d-2}(z_2^2+a_1z_0^2)+z_3^{d-2}(a_2z_1^2+a_3z_2^2+z_3^2)
\]	
is smooth, and  the complement $\mathbb{CP}^3\setminus D$ is  hyperbolically embedded in $\mathbb{CP}^3$.
\end{namedthm*}

On the other hand, in the quantitative aspect of the hyperbolicity problem, called value distribution theory or Nevanlinna theory,  one studies the frequency of impacts of  entire holomorphic curves into projective space and a family of hypersurfaces, by means of certain Second Main Theorem type estimate.

Before entering the details of the next result, we give a brief introduction to Nevanlinna theory in projective space. For a positive number $r>0$, we denote by $\Delta_r\subset \mathbb{C}$ the disk  of radius $r$ centered at the origin. Fix a truncation level $m\in \mathbb{N}\cup \{\infty\}$, 
for an effective divisor $E=\sum_i\alpha_i\,a_i$ on $\mathbb{C}$ where $\alpha_i\geq 0$, $a_i\in\mathbb{C}$,  the $m$-truncated degree of the divisor $E$ on  the disk $\Delta_r$ is given by
\[
n^{[m]}(r,E)
:=
\sum_{a_i\in\Delta_r}
\min
\,
\{m,\alpha_i\},
\]
the \textsl{truncated counting function at level} $m$ of $E$ is then defined by taking the logarithmic average
\[
N^{[m]}(r,E)
\,
:=
\,
\int_1^r \frac{n^{[m]}(t, E)}{t}\,\dif\! t
\eqno
{{\scriptstyle (r\,>\,1)}.}
\]
When $m=\infty$, for abbreviation we  write $n(t,E)$, $N(r,E)$ for $n^{[\infty]}(t,E)$, $N^{[\infty]}(r,E)$.

Let $f\colon\mathbb{C}\rightarrow \mathbb{P}^n(\mathbb{C})$ be an entire holomorphic curve having a reduced representation $f=[f_0:\cdots:f_n]$ in the homogeneous coordinates $[z_0:\cdots:z_n]$ of $\mathbb{P}^n(\mathbb{C})$. Let $D=\{Q=0\}$ be a divisor in $\mathbb{P}^n(\mathbb{C})$ defined by a homogeneous polynomial $Q\in\mathbb{C}[z_0,\dots,z_n]$ of degree $d\geq 1$. If $f(\mathbb{C})\not\subset D$, then $f^*D=\sum_{a\in\mathbb{C}}\ord_af^*Q$ is a divisor on $\mathbb{C}$. We then define the \textsl{truncated counting function} of $f$ with respect to $D$ as
\[
N_f^{[m]}(r,D)
\,
:=
\,
N^{[m]}\big(r,f^*D\big),
\]
which measures the  frequency of the intersection  $f(\mathbb{C})\cap D$. Next,
the \textsl{proximity function} of $f$ associated to the divisor $D$ is given by
\[
m_f(r,D)
\,
:=
\,
\int_0^{2\pi}
\log
\frac{\big\Vert f(re^{i\theta})\big\Vert_{\max}^d\,
	\Vert Q\Vert_{\max}}{\big|Q(f)(re^{i\theta})\big|}
\,
\frac{\dif\!\theta}{2\pi},
\]
where $\Vert Q\Vert_{\max}$ is the maximum  absolute value of the coefficients of $Q$ and where
\begin{equation}
\label{| |max definition}
\big\Vert f(z)\big\Vert_{\max}
:=
\max
\{|f_0(z)|,\dots,|f_n(z)|\}.
\end{equation}
Since $\big|Q(f)\big|\leq
\left(\substack{d+n\\ n}
\right)\,
\Vert Q\Vert_{\max}\cdot\Vert f\Vert_{\max}^d$, we see that $m_f(r,D)\geq O(1)$ is bounded  from below by some constant.
Lastly, the \textsl{Cartan order function} of $f$ is defined by
\[
T_f(r)
:=
\frac{1}{2\pi}\int_0^{2\pi}
\log
\big\Vert f(re^{i\theta})\big\Vert_{\max} \dif\!\theta
=
\int_1^r\dfrac{\dif\! t}{t} \int_{|z|<t}f^*\omega_{FS}
+O(1)
\eqno
\scriptstyle (r\,>\,1),
\]
capturing the growth of the area of the image of the disks under $f$, with respect to the Fubini--Study metric $\omega_{\FS}$.
The Nevanlinna theory is then established by comparing the above three functions. It consists of two fundamental theorems (for  backgrounds and recent progresses in Nevanlinna theory, see Noguchi-Winkelmann \cite{Noguchi-Winkelmann2014} and Ru \cite{Ru2021}). The first one is a reformulation of the Poisson-Jensen formula.

\begin{namedthm*}{First Main Theorem}\label{fmt} Let $f\colon\mathbb{C}\rightarrow \mathbb{P}^n(\mathbb{C})$ be a holomorphic curve and let $D$ be a hypersurface of degree $d$ in $\mathbb{P}^n(\mathbb{C})$ such that $f(\mathbb{C})\not\subset D$. Then one has the estimate
	\[
	m_f(r,D)
	+
	N_f(r,D)
	\,
	=
	\,
	d\,T_f(r)
	+
	O(1)
	\]
	for every $r>1$,
	whence
	\begin{equation}
	\label{-fmt-inequality}
	N_f(r,D)
	\,
	\leq
	\,
	d\,T_f(r)+O(1).
	\end{equation}
\end{namedthm*}

Hence the First Main Theorem provides an upper bound on the counting function in term of the order function. The reverse direction, called {\sl Second Main Theorem}, is usually much harder, and one often needs to take the sum of the counting functions of many divisors.

Throughout this paper, for an entire curve $f$, by  $S_f(r)$, we mean a real function of $r \in \mathbb{R}^+$ such that 
\[
S_f(r) \leq
O(\log(T_f(r)))+ \epsilon \log r
\]
for every positive constant $\epsilon$ and every $r$ outside of a subset (depending on $\epsilon$) of finite Lebesgue measure of $\mathbb{R}^+$. In the case where $f$ is rational, we understand that $S_f(r)=O(1)$. In any case, there holds

\[
\liminf_{r\rightarrow\infty}\dfrac{S_f(r)}{T_f(r)}
=
0.
\]

A family $\{D_i\}_{1\leq i\leq q}$  of $q\geq n+2$ hypersurfaces in $\mathbb{CP}^n$ is said to be {\sl in general position} if $\cap_{i\in I}D_i=\varnothing$ for any subset $I\subset\{1,\dots,q\}$ of cardinality $n+1$. For a linearly nondegenerate entire curve
$f:\mathbb{C}\rightarrow\mathbb{P}^n(\mathbb{C})$ and for a  family of $q\geq n+2$ hyperplanes $\{H_i\}_{i\,=\,1,\dots,\, q}$ in {\sl general position}, Cartan~\cite{Cartan1933} established a second main theorem
\begin{equation}
\label{Cartan SMT}
(q-n-1)
\,
T_f(r)
\,
\leq
\,
\sum_{i=1}^q N_f^{[n]}(r,H_i)+S_f(r),
\end{equation}
which implies the defect relation
\[
\sum_{i=1}^{q}\delta_{f}^{[n]}(H_i)
\leq n+1.
\]
In the particular case $n=1$, Cartan recovered the classical Nevanlinna's Second Main Theorem. In the collinear case, by a purely potential theoretic approach, a Second Main Theorem for non-constant holomorphic curves $f$ in $\mathbb{CP}^n$ and family of  hypersurfaces $\{D_i\}_{i=1}^q$ in general position without truncation  were given by Eremenko-Sodin \cite{Eremenko-Sodin1992} which implies a defect relation bounded by $2n$. Assuming the algebraically nondegenerate condition for entire curves,  Ru  \cite{Minru2004, Minru2009} obtained a defect relation bounded by $n+1$ by method from Diophantine approximation. Second Main Theorem quantifying the logarithmic Green-Griffiths and Kobayashi conjecture were given in \cite{HVX17}, \cite{Brotbek-Deng2019}, using the logarithmic jet differentials.

Currently, from dimension $n\geq 2$, there are only two results about Second Main Theorem for non constant holomorphic curve $f$ into $\mathbb{CP}^n$. These are the Second Main Theorems of Eremenko-Sodin \cite{Eremenko-Sodin1992} and Brotbek-Deng \cite{Brotbek-Deng2019}. In these works, one either needs many targets (at least $2n+1$) or very large degree (exponential growth compared with the dimension). Our next purpose in this paper is to seek Second Main Theorem for {\sl non-constant} entire holomorphic curves into $\mathbb{CP}^n$ and a hypersurface in this space whose complement is hyperbolically embedded in $\mathbb{CP}^n$. We shall deal with the hypersurface of Fermat-Waring type constructed by Siu-Yeung \cite{siu_yeung1997} and Shiffman-Zaidenberg \cite{shiffman_zaidenberg2002_pn}. Here is the statement of our next result.
\begin{namedthm*}{Theorem B}
	Let $m,n,d$ be positive integers with $m\geq 3n-1$ and $d\geq m^2-m+1$. Let $D$ be the Fermat-Waring type hypersurface in $\mathbb{CP}^n$ defined by the homogeneous polynomial
	\[
	\sum_{i=1}^m h_i^d,
	\]
	where $h_i$ are generic homogeneous linear forms on $\mathbb{C}^{n+1}$. For a {\sl nonconstant} holomorphic function $f\colon\mathbb{C}\rightarrow\mathbb{CP}^n$ whose image is not contained in $\supp D$, the following Second Main Theorem type estimate holds
	\[
	\big(d-m(m-1)\big)\,T_f(r)\leq N_f^{[m-1]}(r,D)+S_f(r).
	\]
\end{namedthm*}

Notably, this implies a defect relation involving the degree of the hypersurface $D$. It shows that the defect of $f$ with respect to $D$ is small when the degree of $D$ is large (see \cite{Tiba2012}, \cite{Yang2017}, \cite{Thin2017} for related results).
\begin{namedthm*}{Defect relation}
	Let $m,n,d$ be positive integers with $m\geq 3n-1$ and $d\geq m^2-m+1$. Let $D$ be the Fermat-Waring type hypersurface in $\mathbb{CP}^n$ defined by the homogeneous polynomial
	\[
	\sum_{i=1}^m h_i^d,
	\]
	where $h_i$ are generic homogeneous linear forms on $\mathbb{C}^{n+1}$. For a {\sl nonconstant} holomorphic function $f\colon\mathbb{C}\rightarrow\mathbb{CP}^n$ whose image is not contained in $\supp D$, the following defect inequality holds
	\[
	\delta_{f}^{[m-1]}(D)
	\leq
	\dfrac{m(m-1)}{d}.
	\]
	Consequently
	\[
	\lim_{d\rightarrow\infty}
	\delta_{f}^{[m-1]}(D)
	=
	0.
	\]
\end{namedthm*}
From the above defect relation, and in view of the fundamental conjecture for entire curves of Griffiths, one may expect the following
\begin{namedthm*}{Conjecture}
	Let $f\colon\mathbb{CP}^n$ be a nonconstant entire holomorphic curve. Let $D$ be a generic hypersurface of degree $d>2n$. If the image of $f$ is not contained in $D$, then the following defect relation holds
	\[
	\delta_{f}(D)
	\leq
	\dfrac{2n}{d}.
	\]
\end{namedthm*}

\section*{Acknowledgement}
A part of this article was written while the author was visiting Vietnam Institute for Advanced Study in Mathematics (VIASM).
\section{Some variants of generalized Borel's Theorem}
\label{section: preparation}
As pointed out by Shiffman in a private conversation with Siu  \cite{siu1990}, from the Cartan Second Main Theorem with truncated counting functions, one can deduce a Second Main Theorem for algebraically nondegenerate holomorphic curves and a family of Fermat type hypersurfaces. Using this technique, some generalizations have been done recently for  small perturbations of Fermat type hypersurfaces \cite{Yang2017}, \cite{Thin2017} (see also \cite{Tiba2012} for a geometric approach via connections). For later purpose, we present here a slightly modifications of these results with a simplified proof.
\begin{pro}
	\label{generalized borel}
Let $d,n,\delta_0,\dots,\delta_n$ be integer numbers with $\delta_i\geq 0$, $n\geq 2$ and $d>n(n+1)+\sum_{i=0}^{n}\delta_i$. Let $Q_i$ ($0\leq i\leq n$) be homogeneous polynomials of degree $\delta_i$. Suppose that the family of hypersurfaces $\{D_i\}_{0\leq i\leq n}$ where $D_i=\{z_i^{d-\delta_i}Q_i=0\}$ is in general position in $\mathbb{CP}^n$. Let $D$ be the hypersurface in $\mathbb{CP}^n$ defined by the homogeneous polynomial 
\[
\sum_{i=0}^{n}z_i^{d-\delta_i}Q_i.
\]
 Let $f\colon\mathbb{C}\rightarrow\mathbb{CP}^n$ be a non-constant holomorphic curve. If there exists no nontrivial linear relation among $(z_i^{d-\delta_i}Q_i)\circ f$, then
\[
\big[d-\big(n(n+1)+\sum_{i=0}^{n}\delta_i\big)\big] T_f(r)\leq
N_f^{[n]}(r,D)+S_f(r).
\]
\end{pro}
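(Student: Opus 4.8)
The plan is to apply Cartan's Second Main Theorem \eqref{Cartan SMT} to an auxiliary curve built from the $n+1$ degree-$d$ polynomials $P_i := z_i^{d-\delta_i}Q_i$. Writing $f = [f_0:\cdots:f_n]$ for a reduced representation, I set $g_i := P_i(f)$ and $g := [g_0:\cdots:g_n]\colon \mathbb{C}\to\mathbb{CP}^n$. The hypothesis that the $(z_i^{d-\delta_i}Q_i)\circ f$ admit no nontrivial linear relation says precisely that $g_0,\dots,g_n$ are linearly independent over $\mathbb{C}$, i.e.\ $g$ is linearly nondegenerate, so Cartan's theorem applies. I would apply it to the $q = n+2$ hyperplanes $\{w_i = 0\}$ ($0\le i\le n$) together with $\{w_0+\cdots+w_n = 0\}$ in the target $\mathbb{CP}^n$ of $g$; these are readily checked to be in general position, and since $q-n-1 = 1$ the estimate reads
\[
T_g(r) \le \sum_{i=0}^n N_g^{[n]}\big(r,\{w_i=0\}\big) + N_g^{[n]}\big(r,\{w_0+\cdots+w_n=0\}\big) + S_g(r).
\]

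The observation that makes everything clean is that general position of $\{D_i\}_{0\le i\le n}$ forces $\bigcap_{i=0}^n D_i = \varnothing$, hence the homogeneous polynomials $P_0,\dots,P_n$ have no common zero. Consequently $(g_0,\dots,g_n)$ is automatically a reduced representation of $g$, and since the $P_i$ have no common zero on the unit sphere, continuity and homogeneity furnish constants $0<c'\le C$ with $c'\,\Vert w\Vert_{\max}^{d} \le \max_i|P_i(w)| \le C\,\Vert w\Vert_{\max}^{d}$. Taking logarithms and averaging over the circle then yields the identity $T_g(r) = d\,T_f(r) + O(1)$; in particular $\log T_g(r)=\log T_f(r)+O(1)$, so any $S_g(r)$ term is an $S_f(r)$ term.

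It then remains to estimate the two kinds of counting functions. For the sum-hyperplane, $\sum_i g_i = Q(f)$ is exactly the defining function of $f^{*}D$, so $N_g^{[n]}(r,\{w_0+\cdots+w_n=0\}) = N_f^{[n]}(r,D)$. For each coordinate hyperplane, $\operatorname{div}(g_i) = (d-\delta_i)\operatorname{div}(f_i) + \operatorname{div}(Q_i\circ f)$, and the degree assumption gives $d-\delta_i > n$, so every zero of $f_i$ saturates the truncation level $n$; this yields
\[
N_g^{[n]}\big(r,\{w_i=0\}\big) \le n\,N_f^{[1]}\big(r,\{z_i=0\}\big) + N_f^{[n]}\big(r,\{Q_i=0\}\big).
\]
Applying the First Main Theorem \eqref{-fmt-inequality} to each term (the hyperplanes $\{z_i=0\}$ have degree $1$ and $Q_i$ degree $\delta_i$, and linear independence of the $g_i$ excludes $f(\mathbb{C})\subset\{z_i=0\}$ or $f(\mathbb{C})\subset\{Q_i=0\}$) and summing over $i$ gives $\sum_{i=0}^n N_g^{[n]}(r,\{w_i=0\}) \le \big(n(n+1)+\textstyle\sum_{i=0}^{n}\delta_i\big)\,T_f(r) + O(1)$. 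Substituting this together with $T_g(r)=d\,T_f(r)+O(1)$ and the sum-hyperplane identity into Cartan's inequality and rearranging produces the claimed estimate.

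The conceptual heart of the argument, and the only genuinely delicate point, is the passage from $f$ to $g$: one must know both that $g$ is linearly nondegenerate (guaranteed by hypothesis) and that $T_g = d\,T_f + O(1)$ with a reduced representation, which is exactly where general position of $\{D_i\}$ — making $[P_0:\cdots:P_n]$ base-point free — enters. Everything afterward is bookkeeping with truncation, the sole quantitative input being $d-\delta_i>n$, so that the high-multiplicity zeros produced by the factors $f_i^{d-\delta_i}$ contribute only $n\,N_f^{[1]}$ rather than the full $N_f$. I do not expect the degenerate containments $f(\mathbb{C})\subset\{Q_i=0\}$ and the like to cause difficulty, since each would force some $g_i\equiv 0$ and violate linear nondegeneracy.
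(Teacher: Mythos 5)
Your proposal is correct and follows essentially the same route as the paper: the same auxiliary map $g=[P_0(f):\cdots:P_n(f)]$, the same $n+2$ hyperplanes, Cartan's Second Main Theorem, the identity $T_g(r)=d\,T_f(r)+O(1)$ from general position, and the observation that zeros of $f_i$ force multiplicity $\geq d-\delta_i>n$ in $g_i$ so that the truncation at level $n$ absorbs them. The only difference is cosmetic bookkeeping: you bound $N_g^{[n]}(r,\{w_i=0\})$ by $n\,N_f^{[1]}(r,\{z_i=0\})+N_f^{[n]}(r,\{Q_i=0\})$ and apply the First Main Theorem to $f$, whereas the paper writes $N_g^{[n]}(r,H_i)\leq N_f(r,Q_i)+\frac{n}{d}N_g(r,H_i)$ and applies it to $g$; both yield the same bound $\big(n+\delta_i\big)T_f(r)+O(1)$.
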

\begin{proof} Since $\{z_i^{d-\delta_i}Q_i\}_{0\leq i\leq n}$ is in general position, for any $w\in\mathbb{S}_{n+1}:=\{w\in\mathbb{C}^{n+1}\setminus\{0\},\|w\|=1\}$, there exists at least one index $i$ with $(z_i^{d-\delta_i}Q_i)(w)\not=0$. Thus, by compactness property of $\mathbb{S}_{n+1}$, there exist constants $C_1,C_2>0$ such that
\[
C_1\leq \max_{0\leq i\leq n}\big|(z_i^{d-\delta_i}Q_i)(w)\big|^{\frac{1}{d}}
\leq C_2\eqno(\forall\,w\in\,\mathbb{S}_{n+1}).
\]
Since $z_i^{d-\delta_i}Q_i$ are homogeneous of degree $d$, the above estimate implies
\[
C_1\leq \max_{0\leq i\leq n}\dfrac{\big|(z_i^{d-\delta_i}Q_i)(w)\big|^{\frac{1}{d}}}{\|w\|}
\leq C_2\eqno(\forall\,w\in\,\mathbb{CP}^n).
\]
Now  let $f=[f_0:\dots:f_n]$ be a reduced representation of $f$. Setting
\begin{align*}
\pi\colon\mathbb{CP}^n&\rightarrow\mathbb{CP}^n,\qquad [z_0:\dots:z_n]\mapsto [z_0^{d-\delta_0}Q_0:\dots:z_n^{d-\delta_n}Q_n],\\
g=[g_0:\dots:g_n]\colon \mathbb{C}&\rightarrow\mathbb{CP}^n,\qquad z\mapsto
\pi\circ f(z),
\end{align*}
where $g_i:=(z_i^{d-\delta_i}Q_i)\circ f$. It follows from the above estimate that 
\begin{equation}
\label{comparing order functions of f and g, generalized borel}
T_g(r)=d\,T_f(r)+O(1).
\end{equation}
Let $\{H_i\}_{0\leq i\leq n+1}$ be the family of $n+2$ hyperplanes in $\mathbb{CP}^{n}$ given by 
\begin{align*}
H_i&=\{z_i=0\}\qquad(0\,\leq\,i\,\leq\,n),\\
H_{n+1}&=\{\sum_{i=0}^{n}z_i=0\},
\end{align*}
which is in general position. By assumption, the map $g$ is linearly nondegenerate. Hence, applying Cartan's Second Main Theorem for $g$ and $\{H_i\}$, one receives
\begin{equation}
\label{applying cartan to g and Hi, generalized borel}
T_g(r)
\leq
\sum_{i-0}^{n}N_g^{[n]}(r,H_i)
+
N_g^{[n]}(r,H_{n+1}).
\end{equation}

For $0\leq i\leq n$, the curve $g$ intersects  $H_i$ if and only if the curve $f$ intersects $H_i$ or $Q_i$. Furthermore, if $f$ intersects such $H_i$, then $g$ intersects $H_i$ with multiplicity at least $d$. Hence, from these observations, one gets
\[
N_g^{[n]}(r,H_i)
\leq 
N_f(r,Q_i)+\dfrac{n}{d}N_g(r,H_i),\eqno (0\,\leq\, i\,\leq\,n)
\]
which implies 
\[
N_g^{[n]}(r,H_i)
\leq 
\delta_i\,T_f(r)
+
\dfrac{n}{d}T_g(H_i)
,\eqno (0\,\leq\, i\,\leq\,n),
\] 
by the First Main Theorem. Obviously, one also has $N_g^{[n]}(r,H_{n+1})=N_f^{[n]}(r,D)$. Combining these facts together with \eqref{comparing order functions of f and g, generalized borel}, \eqref{applying cartan to g and Hi, generalized borel}, we receive the desired estimate.
\end{proof}
One can deduce immediately from Proposition~\ref{generalized borel} that, under the same assumptions therein, if $f$ avoids $D$, then it must be (algebraically) degenerate. By following the arguments  in \cite[Example 3.10.22]{Kobayashi1998}, one can actually obtain stronger degeneracy result.
\begin{namedthm*}{Generalized Borel's Theorem (logarithmic case)}
Let $d,n,\delta_0,\dots,\delta_n$ be integer numbers with $\delta_i\geq 0$, $n\geq 2$ and $d>n(n+1)+\sum_{i=0}^{n}\delta_i$. Let $Q_i$ ($0\leq i\leq n$) be homogeneous polynomials of degree $\delta_i$. Suppose that the hypersurfaces  $D_i=\{z_i^{d-\delta_i}Q_i=0\}$ ($0\leq i\leq n+1$) have empty intersection. Then, for the collection of $n+1$ entire holomorphic functions $f_i$ ($0\leq i\leq n$) such that
\[
\sum_{i=0}^{n}f_i^{d-\delta_i}Q_i(f_0,f_1,\dots,f_n)
\]
is nowhere vanishing, there is a partition of indexes $\{0,\dots,n\}=\cup_{\alpha=0}^{\ell}I_{\alpha}$ such that the followings hold
\begin{enumerate}
\item[(i)] $f_i^{d-\delta_i}Q_i(f_0,f_1,\dots,f_n)\equiv 0$ if and only if $i\in I_0$ (of course the set $I_0$ may be empty);
\item[(ii)] The cardinality $|I_{\alpha}|\geq 2$ for every $1\leq\alpha\leq \ell$, with at most one exception;
\item[(iii)] For each $1\leq \alpha\leq \ell$, for arbitrary indexes $i,j\in I_{\alpha}$, there exists a constant $c_{ij}\in\mathbb{C}$ such that $\frac{f_i^{d-\delta_i}Q_i(f_0,f_1,\dots,f_n)}{f_j^{d-\delta_j}Q_j(f_0,f_1,\dots,f_n)}=c_{ij}$;
\item[(iv)] $\sum_{i\in I_{\alpha}}f_i^{d-\delta_i}Q_i(f_0,f_1,\dots,f_n)\equiv 0$ for all $0\leq\alpha\leq\ell$ with one exception.
\end{enumerate}
\end{namedthm*}

\begin{proof}
Consider the map $f\colon\mathbb{C}\rightarrow\mathbb{CP}^n$ given as $f=[f_0:\dots:f_n]$. Let the map $g=[g_0:\dots:g_n]$ and the family of hyperplanes $\{H_i\}_{0\leq i\leq n+1}$ be as in the proof of Proposition~\ref{generalized borel}. Let $I_0=\{0\leq i\leq n: g_i\equiv 0\}$ and $J=\{0,\dots,n\}\setminus I_0$. Suppose that $|I_0|=\ell$. Then the image of $g$ lies in the subspace $H=\cap_{i\in I_0}H_i\cong\mathbb{CP}^{m-1-\ell}$. By the same arguments as in the proof of Proposition~\ref{generalized borel}, one deduces that  $g(\mathbb{C})$ lies in some hyperplane of $H$. Next, we follow the arguments as in  \cite[Example 3.10.21]{Kobayashi1998} (see also \cite{Green1975}). Suppose that $g_i$ satisfy the relation
\[
\sum_{k\in K} a_k g_k=0,
\]
where $K\subset J$, $|K|\geq 2$ and $a_k\in\mathbb{C}$ are nonzero constants. We claim that there exist two indexes $i,j\in k$ such that $g_i/g_j$ is constant. Indeed, if $|K|=2$, then we have nothing to prove. Otherwise, we consider the map $g_K=[g_k]_{[k\in K]}, \mathbb{C}\rightarrow H_k\cong\mathbb{CP}^{|K|-2}$, where $H_K$ is the hyperplane in $H$ defined by $\{\sum_{k\in K}z_k=0\}$, and the $|K|$ hyperplanes $\{z_k=0\} (k\in K)$. Using arguments as in the proof of Proposition~\ref{generalized borel}, one obtains further degeneracy. Inductively, the claim is proved.

Let $\sim$ be the equivalence relation on the index set $J$ defined as $i\sim j$ if and only if $g_i/g_j$ is constant and let $\{I_1,\dots, I_{\ell}\}$ be the partition of $J$ by $\sim$. Then for each $1\leq s\leq \ell$, we pick an index $i_s\in I_s$ and put
$g_j=\ell_jg_{i_s}$. Set $b_s=\sum_{j\in I_s}\ell_j$, then
\[
\sum_{i=0}^{n}g_i=\sum_{s=1}^{\ell}b_sg_{i_s}.
\]
It suffices to prove that the set $M=\{s:1\leq s\leq\ell: b_s\not=0\}$ consists only one elements. Indeed, if $|M|\geq 2$, then can use arguments as in Proposition~\ref{generalized borel} to get a nontrivial linear relation among $\{g_s\}_{s\in M}$. Then by the above claim, there exist two indexes $s_1,s_2\in M$ such that $g_{s_1}/g_{s_2}$ is constant, contradiction. Hence $M$ is of cardinality $1$. This finishes the proof of the Theorem.
\end{proof}

We can reformulate the above statement in a more geometric way as follows.
\begin{cor}
\label{degeneracy result from smt for D=generalized borel, logarithmic case}
Keeping the same assumption as in Proposition \ref{generalized borel}. Putting
\begin{align*}
\pi\colon\mathbb{CP}^n&\rightarrow\mathbb{CP}^n,\qquad [z_0:\dots:z_n]\mapsto [z_0^{d-\delta_0}Q_0:\dots:z_n^{d-\delta_n}Q_n],\\
g=[g_0:\dots:g_n]\colon \mathbb{C}&\rightarrow\mathbb{CP}^n,\qquad z\mapsto
\pi\circ f(z),
\end{align*}
where $g_i:=(z_i^{d-\delta_i}Q_i)\circ f$. If $f\colon\mathbb{C}\rightarrow\mathbb{CP}^n\setminus D$ is a non-constant holomorphic curve avoiding $D$, then the image of the composition map $g$ is contained in a smaller linear subspace of $\mathbb{CP}^n$ of dimension at most $\big[\frac{n}{2}\big]$.
\end{cor}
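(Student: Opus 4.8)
The plan is to feed the component functions $g_i=(z_i^{d-\delta_i}Q_i)\circ f$ into the Generalized Borel's Theorem (logarithmic case) and then convert the combinatorial partition it produces into a bound on the dimension of the linear span of $g(\mathbb{C})$.

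First I would check that the hypotheses are met. The assumption that $f$ avoids $D$ says exactly that
\[
\sum_{i=0}^{n} g_i=\sum_{i=0}^{n} f_i^{d-\delta_i}Q_i(f_0,\dots,f_n)
\]
is nowhere vanishing, which is precisely the standing assumption of the Generalized Borel's Theorem; the general position hypothesis carried over from Proposition~\ref{generalized borel} supplies the empty-intersection condition required there. Applying the theorem, I obtain a partition $\{0,\dots,n\}=\bigcup_{\alpha=0}^{\ell}I_\alpha$ satisfying properties (i)--(iv).

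Next I would read off the subspace containing $g(\mathbb{C})$. By (i) the coordinates indexed by $I_0$ vanish identically along $g$, and by (iii) within each block $I_\alpha$ ($1\leq\alpha\leq\ell$) all the $g_i$ are constant multiples of a single representative $g_{i_\alpha}$. Hence $g(\mathbb{C})$ is confined to the linear subspace $L\subset\mathbb{CP}^n$ cut out by $\{z_i=0:i\in I_0\}$ together with the proportionality relations $z_i=c_{i\,i_\alpha}z_{i_\alpha}$ for $i\in I_\alpha$; the affine cone of $L$ is parametrized by the $\ell$ coordinates $z_{i_1},\dots,z_{i_\ell}$, so $\dim L\leq\ell-1$.

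Finally I would bound $\ell$ using the cardinality information in (ii). Since $|I_\alpha|\geq 2$ for every $\alpha\geq 1$ with at most one exception, the $\ell$ nonzero blocks satisfy $\sum_{\alpha=1}^{\ell}|I_\alpha|\geq 2\ell-1$, whence $n+1=|I_0|+\sum_{\alpha=1}^{\ell}|I_\alpha|\geq 2\ell-1$ and thus $\ell\leq\tfrac{n+2}{2}$; since $\ell-1$ is an integer, this yields $\dim L\leq\ell-1\leq\big[\tfrac n2\big]$, as desired. The only point requiring care is the bookkeeping that the extremal configuration (empty $I_0$, one singleton block, all remaining blocks of size exactly two) indeed saturates the bound $\big[\tfrac n2\big]$; note that property (iv) plays no role in the dimension estimate, which rests only on (i)--(iii) and the count (ii).
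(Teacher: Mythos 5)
Your proposal is correct and follows essentially the same route as the paper: invoke the Generalized Borel's Theorem (logarithmic case) to obtain the partition $\{I_\alpha\}$, observe that the vanishing and proportionality relations confine $g(\mathbb{C})$ to a linear subspace of projective dimension $\ell-1$, and then use the cardinality bound $|I_\alpha|\geq 2$ (with at most one exception) to get $\ell\leq\big[\tfrac{n+2}{2}\big]$ and hence dimension at most $\big[\tfrac{n}{2}\big]$. The paper phrases the middle step as counting $n+1-\ell$ independent linear relations rather than parametrizing by $\ell$ coordinates, but the two bookkeepings are identical.
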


\begin{proof}
As before, we can find a partition $\{I_0,I_1\dots, I_{\ell}\}$  of $\{0,1,\dots,n\}$, all $I_i$ ($1\leq i\leq\ell$) have cardinality at least $2$ with at most one exception,  such that
\begin{enumerate}
	\item[(a)] $g_i\equiv 0$ for all $i\in I_0$;
	\item[(b)] For each $1\leq s\leq \ell$, pick an index $i_s\in I_s$, then for any $j\in I_s$ there exists a nonzero constant $\mu_{s,j}$  such that $g_{j}=\mu_{s,j} g_{i_s}$.
	\end{enumerate}
The image of $g$ lies in the linear subspace defined by the equations
\[
z_j=\mu_{s,j}z_{z_s}\quad (1\leq s\leq \ell,\quad j\in I_s,\, j\not=i_s);\qquad z_j=0 \quad (j\in I_0).
\]
Thus $\{g_i\}_{0\leq i\leq n}$ must satisfy at least $|I_0|+\sum_{s=1}^{\ell}( |I_s|-1)=n+1-\ell$ independent linear relations. Since $|I_s|\geq 2$ for at least $\ell-1$ indexes among $\{1,\dots,\ell \}$, there holds $\ell\leq \bigg[\dfrac{n+2}{2}\bigg]$, which implies that the image of $g$ is contained in a linear subspace of dimension at most $\bigg[\dfrac{n+2}{2}\bigg]-1=\bigg[\dfrac{n}{2}\bigg]$.
\end{proof}

Similarly, one can also get analog results in the compact case.

\begin{namedthm*}{Generalized Borel Theorem (compact case)}
\label{generalized borel theorem, compact case}
Let $d,n,\delta_0,\dots,\delta_n$ be integer numbers with  $\delta_i\geq 0$, $n\geq 2$ and $d>(n-1)(n+1)+\sum_{i=0}^{n}\delta_i$.  Suppose that the hypersurfaces  $D_i=\{z_i^{d-\delta_i}Q_i=0\}$ ($0\leq i\leq n+1$) have empty intersection. Then, for the collection of $n+1$ entire holomorphic functions $f_i$ ($0\leq i\leq n$) such that
\[
\sum_{i=0}^{n}f_i^{d-\delta_i}Q_i(f_0,f_1,\dots,f_n)\equiv 0,
\]
there is a partition of indexes $\{0,\dots,n\}=\cup_{\alpha=0}^{\ell}I_{\alpha}$ such that the followings hold
\begin{enumerate}
	\item[(i)] $f_i^{d-\delta_i}Q_i(f_0,f_1,\dots,f_n)\equiv 0$ if and only if $i\in I_0$ (of course the set $I_0$ may be empty);
	\item[(ii)] The cardinality $|I_{\alpha}|\geq 2$ for every $1\leq\alpha\leq \ell$;
	\item[(iii)] For each $1\leq \alpha\leq \ell$, for arbitrary indexes $i,j\in I_{\alpha}$, there exists a constant $c_{ij}\in\mathbb{C}$ such that $\frac{f_i^{d-\delta_i}Q_i(f_0,f_1,\dots,f_n)}{f_j^{d-\delta_j}Q_j(f_0,f_1,\dots,f_n)}=c_{ij}$;
	\item[(iv)] $\sum_{i\in I_{\alpha}}f_i^{d-\delta_i}Q_i(f_0,f_1,\dots,f_n)\equiv 0$ for all $0\leq\alpha\leq\ell$.
\end{enumerate}
\end{namedthm*}

\begin{cor}
\label{degeneracy result from smt for D=generalized borel, compact case}
Keeping the same assumption as above. Putting
\begin{align*}
\pi\colon\mathbb{CP}^n&\rightarrow\mathbb{CP}^n,\qquad [z_0:\dots:z_n]\mapsto [z_0^{d-\delta_0}Q_0:\dots:z_n^{d-\delta_n}Q_n],\\
g=[g_0:\dots:g_n]\colon \mathbb{C}&\rightarrow\mathbb{CP}^n,\qquad z\mapsto
\pi\circ f(z),
\end{align*}
where $g_i:=(z_i^{d-\delta_i}Q_i)\circ f$. If $f\colon\mathbb{C}\rightarrow D$ is a non-constant holomorphic curve into $D$, then the image of the composition map $g$ is contained in a smaller linear subspace of $\mathbb{CP}^n$ of dimension at most $\big[\frac{n-1}{2}\big]$.	
\end{cor}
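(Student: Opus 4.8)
The plan is to carry over the proof of Corollary~\ref{degeneracy result from smt for D=generalized borel, logarithmic case} essentially verbatim, the only difference being that the compact version of the Generalized Borel Theorem furnishes a sharper partition, which in turn yields the improved dimension bound. First I would fix a reduced representation $f=[f_0:\dots:f_n]$. The hypothesis that $f$ maps into $D=\big\{\sum_{i=0}^{n}z_i^{d-\delta_i}Q_i=0\big\}$ means exactly that
\[
\sum_{i=0}^{n}g_i=\sum_{i=0}^{n}f_i^{d-\delta_i}Q_i(f_0,\dots,f_n)\equiv 0,
\]
so the entire functions $f_0,\dots,f_n$ satisfy precisely the hypothesis of the Generalized Borel Theorem (compact case). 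Applying that theorem produces a partition $\{0,\dots,n\}=I_0\cup I_1\cup\dots\cup I_\ell$ together with properties (i)--(iv).

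Next I would read off the linear constraints on $g(\mathbb{C})$ from this partition. Property (i) gives $g_i\equiv 0$ for $i\in I_0$, while property (iii) lets me fix a representative $i_s\in I_s$ for each $1\le s\le\ell$ and write $g_j=\mu_{s,j}\,g_{i_s}$ with $\mu_{s,j}\in\mathbb{C}^*$ for every $j\in I_s$. Consequently the image of $g$ lies in the projective subspace of $\mathbb{CP}^n$ defined by
\[
z_j=0\ \ (j\in I_0),\qquad z_j=\mu_{s,j}\,z_{i_s}\ \ (1\le s\le\ell,\ j\in I_s,\ j\ne i_s).
\]
These equations are visibly independent and number $|I_0|+\sum_{s=1}^{\ell}(|I_s|-1)=(n+1)-\ell$, so $g(\mathbb{C})$ is contained in a linear subspace of dimension $n-\big((n+1)-\ell\big)=\ell-1$.

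The only remaining task, and the sole point where the argument departs from the logarithmic case, is to estimate $\ell$. Here property (ii) of the compact statement asserts that \emph{every} block $I_s$ with $1\le s\le\ell$ satisfies $|I_s|\ge 2$, with no exceptional block permitted; in the logarithmic case a single singleton block was allowed, which is precisely what weakened the bound there to $\big[\frac{n}{2}\big]$. Hence $2\ell\le\sum_{s=1}^{\ell}|I_s|\le n+1$, giving $\ell\le\big[\frac{n+1}{2}\big]$ and therefore a subspace of dimension at most $\big[\frac{n+1}{2}\big]-1=\big[\frac{n-1}{2}\big]$. I expect no genuine obstacle: all the analytic content is absorbed into the Generalized Borel Theorem (compact case), and what remains is a bookkeeping of linear relations. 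The two elementary facts worth verifying are the parity identity $\big[\frac{n+1}{2}\big]-1=\big[\frac{n-1}{2}\big]$ (valid for both parities of $n$) and the observation that the stronger hypothesis $d>(n-1)(n+1)+\sum_{i=0}^{n}\delta_i$ is exactly what allows the compact theorem to dispense with the exceptional singleton block of the logarithmic version, thereby upgrading the bound from $\big[\frac{n}{2}\big]$ to $\big[\frac{n-1}{2}\big]$.
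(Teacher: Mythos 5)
Your argument is correct and is exactly the paper's approach: the paper proves only the logarithmic analogue explicitly, and the compact case is the verbatim adaptation you describe --- apply the compact Generalized Borel Theorem, count the $(n+1)-\ell$ independent linear relations to get dimension $\ell-1$, and use $|I_s|\ge 2$ for \emph{all} $1\le s\le\ell$ to get $\ell\le\big[\frac{n+1}{2}\big]$, hence dimension at most $\big[\frac{n-1}{2}\big]$. One correction to your closing remark, though it does not affect the proof: the compact hypothesis $d>(n-1)(n+1)+\sum_i\delta_i$ is \emph{weaker}, not stronger, than the logarithmic one $d>n(n+1)+\sum_i\delta_i$, and it is not what eliminates the exceptional singleton block --- that elimination comes from the fact that in the compact case $\sum_i g_i\equiv 0$ identically (so a block summing to a nonzero function cannot occur, and a singleton block would force its member into $I_0$), whereas in the logarithmic case the sum is nowhere vanishing and exactly one block, possibly a singleton, carries that nonzero sum.
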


\begin{rmk}
In the case where $\delta_0=\dots=\delta_n=0$, we recover the classical Borel Theorem \cite{Green1975}, \cite{Kobayashi1998}.
\end{rmk}

\section{Example of smooth hyperbolic surface whose complement is hyperbolically embedded in $\mathbb{CP}^3$}

\subsection{Genus of some plane algebraic curves}

We collect some results about the genus of plane algebraic curves (see \cite{Elgoul96} for explicit computations). By using the generalized Borel Theorem, these curves arise naturally as the degeneracy locus of entire holomorphic curves avoiding the Fermat type surface given in Theorem A. 
\begin{pro}
\label{genus computation Xd-2,2 Yd-2,2,betanot=0}
	For arbitrary nonzero constants $\beta,\epsilon_1,\epsilon_2\in\mathbb{C}\setminus\{0\}$, the plane curve $\mathcal{C}$ defined in inhomogeneous coordinates $(X,Y)$ of $\mathbb{CP}^2$ by the polynomial
	\[
	\beta + X^{d-2}(X^2+\epsilon_1^2)+ Y^{d-2}(y^2+\epsilon_2^2)
	\]
	has genus $g\geq 2$ when $d\geq 5$.
\end{pro}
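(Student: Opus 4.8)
The plan is to study $\mathcal C$ through its projective closure and to compute its geometric genus by the genus--degree formula corrected for singularities, namely $g=\binom{d-1}{2}-\sum_p\delta_p$, where the sum runs over the singular points $p$ and $\delta_p$ is the delta-invariant. Since $\binom{d-1}{2}=\tfrac{(d-1)(d-2)}{2}\geq 6$ already for $d\geq 5$, the whole task reduces to bounding the total delta-invariant $\sum_p\delta_p$ of $\mathcal C$.

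First I would homogenize, writing $\mathcal C$ as the degree-$d$ curve cut out by
\[
\widetilde F=\beta Z_0^{d}+Z_1^{d-2}(Z_1^{2}+\epsilon_1^{2}Z_0^{2})+Z_2^{d-2}(Z_2^{2}+\epsilon_2^{2}Z_0^{2}),
\]
and check that there is no singular point on the line at infinity $\{Z_0=0\}$: there $\partial_{Z_1}\widetilde F=dZ_1^{d-1}$ and $\partial_{Z_2}\widetilde F=dZ_2^{d-1}$ cannot vanish together at a projective point, so the $d$ points of $\mathcal C\cap\{Z_0=0\}$ are smooth. Consequently every singular point lies in the affine chart and is a critical point of $F(X,Y)=\beta+\phi(X)+\psi(Y)$, where $\phi(X)=\epsilon_1^{2}X^{d-2}+X^{d}$ and $\psi(Y)=\epsilon_2^{2}Y^{d-2}+Y^{d}$. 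From $\phi'(X)=X^{d-3}\big(dX^{2}+(d-2)\epsilon_1^{2}\big)$ and the analogous expression for $\psi'$, these critical points are exactly the nine points with $X\in\{0,\pm a\}$ and $Y\in\{0,\pm b\}$, where $a^{2}=-\tfrac{(d-2)\epsilon_1^{2}}{d}$ and $b^{2}=-\tfrac{(d-2)\epsilon_2^{2}}{d}$. Here the hypotheses are used sharply: $\beta\neq 0$ gives $F(0,0)=\beta\neq0$, so the origin (the only critical point of high multiplicity) does not lie on $\mathcal C$, while $\epsilon_1,\epsilon_2\neq0$ ensure $a,b\neq0$.

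The core of the argument is to classify these singularities and to bound how many can occur simultaneously. The Hessian of $F$ is diagonal, so a short computation shows that each point $(\pm a,\pm b)$ is an ordinary node with $\delta=1$, whereas at each point $(\pm a,0)$ or $(0,\pm b)$ the germ of $F$ is analytically equivalent to $x^{d-2}+y^{2}$, an $A_{d-3}$ singularity with $\delta=\lfloor (d-2)/2\rfloor$. A critical point lies on $\mathcal C$ exactly when its critical value equals $-\beta$, a single linear constraint; exploiting $\phi(-a)=(-1)^{d}\phi(a)$ and $\psi(-b)=(-1)^{d}\psi(b)$ I would run a case analysis on the parity of $d$. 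For $d$ even the four axis points $(\pm a,0),(0,\pm b)$ share only two critical values and can all land on $\mathcal C$ at once (while the nodes then cannot), giving $\sum_p\delta_p\leq 2(d-2)$; for $d$ odd at most one axis point per coordinate can occur, giving $\sum_p\delta_p\leq d-3$.

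Substituting these bounds into the genus formula finishes the proof: for $d$ even one gets $g\geq\binom{d-1}{2}-2(d-2)=\tfrac{(d-2)(d-5)}{2}\geq2$ as soon as $d\geq6$, and for $d$ odd one gets $g\geq\binom{d-1}{2}-(d-3)=\tfrac{d^{2}-5d+8}{2}\geq2$ for every $d\geq5$, so $g\geq2$ throughout the range $d\geq5$. I expect the main obstacle to be the parity-sensitive bookkeeping for the $A_{d-3}$ points, whose delta-invariants grow linearly in $d$: one must guarantee that for \emph{every} admissible triple $(\beta,\epsilon_1,\epsilon_2)$---not just generic ones---these heavy singularities cannot pile up beyond the stated counts. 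A subordinate point is to justify that $g=\binom{d-1}{2}-\sum_p\delta_p$ is the genus we want: $\mathcal C$ is reduced since its singular locus is finite, and I would either verify irreducibility or observe that for a splitting into $r$ components the identity $\sum_i g(\widetilde{\mathcal C}_i)=\binom{d-1}{2}-\sum_p\delta_p+(r-1)$ only improves the lower bound.
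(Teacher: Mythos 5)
The paper does not actually prove this proposition; it is stated as collected background with a pointer to El Goul's explicit computations, so your write-up is the only proof on the table. Your route --- homogenize, check smoothness along $\{Z_0=0\}$ (where the curve reduces to $Z_1^d+Z_2^d=0$ with $d$ distinct smooth points), locate the affine singularities as critical points of $\beta+\phi(X)+\psi(Y)$ lying on the curve, classify them ($A_1$ at $(\pm a,\pm b)$, $A_{d-3}$ with $\delta=\lfloor (d-2)/2\rfloor$ at the axis points, the origin excluded precisely because $\beta\neq 0$), and then play the single linear constraint ``critical value $=-\beta$'' against the parity symmetry $\phi(-a)=(-1)^d\phi(a)$ --- is the standard one and your bookkeeping is correct. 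I verified the key exclusions: for $d$ even the four axis points force $\phi(a)=\psi(b)=-\beta$, hence $\phi(a)+\psi(b)=-2\beta\neq-\beta$, so no node can coexist with them, giving $\sum_p\delta_p\leq 2(d-2)$; for $d$ odd the values $\pm\phi(a)$ are distinct and nonzero, so at most one axis point per variable and at most one node survive, giving $\sum_p\delta_p\leq d-3$. The resulting bounds $\tfrac{(d-2)(d-5)}{2}$ (even, $d\geq 6$) and $\tfrac{d^2-5d+8}{2}$ (odd, $d\geq 5$) do cover the whole range $d\geq 5$.

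The one point you should not leave as an ``either/or'' is irreducibility. Your fallback identity $\sum_i g(\widetilde{\mathcal C}_i)=\binom{d-1}{2}-\sum_p\delta_p+(r-1)$ only bounds the \emph{sum} of the component genera; it does not rule out a rational or elliptic component, which is what the statement (and its use in the hyperbolicity argument, where an entire curve lands in a single component) actually requires. For $d$ odd this is easy to close with Bezout: the $A_{d-3}$ germs $x^{d-2}+y^2$ are unibranch when $d$ is odd, so two distinct components could only meet at nodes, of which at most one lies on $\mathcal C$, while two components of degrees $d_1+d_2=d$ must meet in $d_1d_2\geq d-1\geq 4$ points counted with multiplicity --- contradiction. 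For $d$ even the $A_{d-3}$ points have two branches meeting with multiplicity $(d-2)/2$, so the crude Bezout count ($d_1d_2\leq 2(d-2)$) is not a contradiction, and you need a supplementary argument (e.g., the two branches at an axis point are tangent to a coordinate line, which is visibly not a component of $\mathcal C$, plus a count excluding low-degree components). Until that is done, the even-degree case of the proposition is not fully established by your argument.
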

\begin{pro}
\label{genus computation Xd-2,2 Yd-2,2}
	For arbitrary nonzero constants $\epsilon_1,\epsilon_2\in\mathbb{C}\setminus\{0\}$ with $(\pm i\epsilon_1)^d+(\pm i\epsilon_2)^d\not=0$, the plane curve $\mathcal{C}$ defined in inhomogeneous coordinates $(X,Y)$ of $\mathbb{CP}^2$ by the polynomial
	\[
	 X^{d-2}(X^2+\epsilon_1^2)+ Y^{d-2}(y^2+\epsilon_2^2)
	\]
	has genus $g\geq 2$ when $d\geq 4$.
\end{pro}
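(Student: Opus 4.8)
The plan is to obtain the genus from the genus--delta formula for an irreducible projective plane curve of degree $d$,
\[
g = \binom{d-1}{2} - \sum_{p \in \operatorname{Sing}(\mathcal{C})} \delta_p,
\]
so that everything reduces to locating the singular points of $\mathcal{C}$ and computing their delta invariants. Writing $F(X,Y) = X^d + \epsilon_1^2 X^{d-2} + Y^d + \epsilon_2^2 Y^{d-2}$ and homogenizing to $\widetilde F = X^d + \epsilon_1^2 X^{d-2}Z^2 + Y^d + \epsilon_2^2 Y^{d-2}Z^2$, I would first rule out singular points on the line at infinity: on $\{Z=0\}$ one has $\widetilde F_X = d X^{d-1}$ and $\widetilde F_Y = d Y^{d-1}$, which vanish together only at $X=Y=0$, not a point of $\mathbb{CP}^2$.

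For the affine singular points I would solve $F = F_X = F_Y = 0$. From $F_X = X^{d-3}\big(d X^2 + (d-2)\epsilon_1^2\big)$ and $F_Y = Y^{d-3}\big(d Y^2 + (d-2)\epsilon_2^2\big)$, each partial forces either the variable to vanish or its square to equal $-\tfrac{d-2}{d}\epsilon_i^2$, giving four cases. The origin $(0,0)$ lies on $\mathcal{C}$ and is singular; the two mixed cases, say $X=0$ and $Y^2 = -\tfrac{d-2}{d}\epsilon_2^2$, give $F = \tfrac{2}{d}\epsilon_2^2 Y^{d-2}\neq 0$ and so do not meet $\mathcal{C}$; and in the remaining case $X^2 = -\tfrac{d-2}{d}\epsilon_1^2$, $Y^2 = -\tfrac{d-2}{d}\epsilon_2^2$ one finds $F = \tfrac{2}{d}\big(\epsilon_1^2 X^{d-2} + \epsilon_2^2 Y^{d-2}\big)$. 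The crux is that this last quantity is, up to a fixed nonzero scalar, of the form $(\pm i\epsilon_1)^d + (\pm i\epsilon_2)^d$ with the signs matching the chosen square roots of $X$ and $Y$; hence the hypothesis is exactly what guarantees that none of these four points lie on $\mathcal{C}$, and the origin is the unique singular point.

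I would then determine the type of the origin. The lowest-order part of $F$ there is $\epsilon_1^2 X^{d-2} + \epsilon_2^2 Y^{d-2}$, a binary form of degree $d-2$ which, since $-\epsilon_2^2/\epsilon_1^2 \neq 0$, factors into $d-2$ distinct linear forms; thus $(0,0)$ is an ordinary singular point of multiplicity $d-2$, with $\delta = \binom{d-2}{2}$. Since the singular locus is finite, $F$ is reduced; irreducibility I would get by a Bezout argument: a proper splitting $\mathcal{C} = \mathcal{C}_1 \cup \mathcal{C}_2$ would force all of $\mathcal{C}_1 \cap \mathcal{C}_2$ to the origin, where the distinct tangents give local intersection multiplicity $m_1 m_2$ with $m_i \le \deg\mathcal{C}_i$ and $m_1 + m_2 = d-2$, so Bezout yields $m_1 m_2 = \deg\mathcal{C}_1\cdot\deg\mathcal{C}_2$, incompatible with $m_1 + m_2 = \deg\mathcal{C}_1 + \deg\mathcal{C}_2 - 2$. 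The genus formula then gives
\[
g = \binom{d-1}{2} - \binom{d-2}{2} = d-2,
\]
which is $\geq 2$ as soon as $d \geq 4$.

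The step I expect to require the most care is the singular-point bookkeeping, and in particular the verification that the non-vanishing hypothesis $(\pm i\epsilon_1)^d + (\pm i\epsilon_2)^d \neq 0$ corresponds precisely to the fourth family of critical points not lying on $\mathcal{C}$ --- this involves keeping track of the parity of $d$ and the independent sign choices. The irreducibility argument is the other delicate point, since the genus--delta formula presupposes it.
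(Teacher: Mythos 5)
Your argument is correct, and it is more than the paper itself supplies: the paper states this proposition without proof, merely pointing to El Goul's article for ``explicit computations.'' What you have written is exactly the standard computation one would expect there, carried out in full: no singularities at infinity; the critical-point analysis showing that the mixed cases miss $\mathcal{C}$ and that the fourth family lies on $\mathcal{C}$ precisely when some $(\pm i\epsilon_1)^d+(\pm i\epsilon_2)^d$ vanishes (your scalar works out to $-\tfrac{2}{d}\bigl(\tfrac{d-2}{d}\bigr)^{(d-2)/2}$, common to both terms, so the translation of the hypothesis is exact); the origin as an ordinary point of multiplicity $d-2$ because $\epsilon_1^2X^{d-2}+\epsilon_2^2Y^{d-2}$ has $d-2$ distinct linear factors; and hence $g=\binom{d-1}{2}-\binom{d-2}{2}=d-2\geq 2$ for $d\geq 4$. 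Two small points to make airtight in a final write-up. First, in the Bezout argument you should allow $m_i=0$ (a component missing the origin), which is excluded immediately since then $\mathcal{C}_1\cap\mathcal{C}_2=\varnothing$ contradicts $d_1d_2\geq 1$; for $m_i\geq 1$ the three cases $(m_1,m_2)=(d_1-2,d_2),(d_1-1,d_2-1),(d_1,d_2-2)$ each give $m_1m_2<d_1d_2$, and the equality $I_0(\mathcal{C}_1,\mathcal{C}_2)=m_1m_2$ does require the observation (which you make) that the two tangent cones share no factor. Second, the hypothesis should be read as ranging over all sign choices; note that only two of the four combinations are genuinely distinct, since replacing both signs multiplies the sum by $(-1)^d$. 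Neither point is a gap, only bookkeeping.
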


\subsection{Construction of smooth hyperbolic surface of low degree whose complement is hyperbolically embedded in $\mathbb{CP}^3$}
This section is devoted to prove Theorem A. Recalling that $D\subset\mathbb{CP}^3$ is the surface  defined by the homogeneous polynomial
\[
z_0^{d}+z_1^{d-2}(z_1^2+a_0z_0^2)+z_2^{d-2}(z_2^2+a_1z_0^2)+z_3^{d-2}(a_2z_1^2+a_3z_2^2+z_3^2).
\]
Using the generalized logarithmic Borel Theorem obtained in the previous section, we will prove the hyperbolicity of $\mathbb{CP}^3\setminus D$. The hyperbolicity of $D$ can be treated by the same arguments, using the compact version of the generalized Borel Theorem. Let $f\colon\mathbb{C}\rightarrow \mathbb{CP}^3\setminus D$ be a holomorphic function. We need to prove that $f$ is constant. As before we consider the map 
\[
\pi\colon\mathbb{CP}^3\rightarrow\mathbb{CP}^3, [z_0:z_1:z_2:z_3]\mapsto [z_0^d:z_1^{d-2}(z_2^2+a_0z_0^2):z_2^{d-2}(z_2^2+a_1z_0^2):z_3^{d-2}(a_2z_1^2+a_3z_2^2+z_3^2)].
\]

By the generalized Borel Theorem, the image of $f$ must satisfy two algebraic equations. Precisely, the image of $g:=\pi\circ f=[g_0:g_1:g_2:g_3]\colon \mathbb{C}\mapsto\mathbb{CP}^3$ must lie in a projective line $L$ which is the intersection of two planes defined by the linear polynomials of the form $\lambda_iz_i+\mu_j z_j=0$. To get further degeneracy, we need to study in very details all possibilities of the line $L$. We consider two cases:
\begin{enumerate}
\item[(a)] The image of $g$ lies in some coordinate plane;
\item[(b)] The image of $g$ does not lie in any coordinate plane.
\end{enumerate}

Let us consider the  case (a). If $g$ lands in another coordinate plane, then by Proposition~\ref{generalized borel}, $g$ must be constant and so is $f$. Thus we can suppose that the image of $g$ is contained in only one coordinate plane. We treat separately  the following possibilities for the position of the image of $g$:
\begin{enumerate}
\item[(a1)] The image of $g$ lies in the plane $z_0$;
\item[(a2)] The image of $g$ lies in the plane $z_i$ for $1\leq i\leq 2$;
\item[(a3)] The image of $g$ lies in the plane $z_3$.
\end{enumerate}

In the case (a1), one has $f_0\equiv 0$. Since $g(\mathbb{C})$ does not lie in any other coordinate plane, we need to consider three circumstances:
\begin{enumerate}
\item[(a1.i)] The image of $f$ lies in the plane curve $z_1^d+z_2^d=0$ and avoid $z_3^{d-2}(a_2z_1^2+a_3z_2^2+z_3^2)$;
\item[(a1.ii)] The image of $f$ lies in the plane curve  $z_1^d+z_3^{d-2}(a_2z_1^2+a_3z_2^2+z_3^2)=0$ and avoids $z_2=0$;
\item[(a1.ii')] The image of $f$ lies in the plane curve  $z_2^d+z_3^{d-2}(a_2z_1^2+a_3z_2^2+z_3^2)=0$ and avoids  $z_1=0$.
\end{enumerate} 

Consider the case (a1.i). The curve $f(\mathbb{C})$ lies in some projective line $z_1+\mu z_2=0$ where $\mu^d=-1$. This line intersects with $z_3^{d-2}(a_2z_1^2+a_3z_2^2+z_3^2)=0$ at three distinct points. This forces $f$ to be constant by the Little Picard Theorem.

Next we consider the case (a1.ii). The plane curve $z_1^d+z_3^{d-2}(a_2z_1^2+a_3z_2^2+z_3^2)=0$ in the inhomogeneous coordinate $(X,Y)$ is given by $X^d+a_2X^2+a_3Y^2+1=0$. This curve is smooth and therefore, by the genus formula, its genus is at least $2$, which forces $f$ to be constant. The case (a1ii') could be treated similarly.

Now we consider the case (a2), and without lost of generality, we assume that $g_1\equiv 0$. As before, we have three circumstances:
\begin{enumerate}
\item[(a2.i)] The image of $f$ lies in the surface  $z_0^d+z_2^{d-2}(z_2^2+a_1z_0^2)=0$ and avoids the surface $z_3^{d-2}(a_2z_1^2+a_3z_2^2+z_3^2)=0$;
\item[(a2.ii)]  The image of $f$ lies in the surface  $z_2^{d-2}(z_2^2+a_1z_0^2)+z_3^{d-2}(a_2z_1^2+a_3z_2^2+z_3^2)=0$ and avoids the surface $z_0=0$;
\item[(a2.iii)] The image of $f$ lies in the surface  $z_0^d+z_3^{d-2}(a_2z_1^2+a_3z_2^2+z_3^2)=0$ and avoids the surface $z_2^{d-2}(z_2^2+a_1z_0^2)=0$.
\end{enumerate} 

First we treat the case (a2.i). The image of $f$ lies in the planes $z_0=\lambda z_2$ and $z_1=0$ or $z_0=\mu z_1$ where $\lambda,\mu$ are some constants satisfying $\lambda^d+a_1\lambda^2+1=0$ and $\mu^2+a=0$. Hence its image lies in a line intersecting with $z_3^{d-2}(a_2z_1^2+a_3z_2^2+z_3^2)=0$ at three distinct points, which implies that $f$ is constant.

For the case (a2.ii), as before, the image of $f$ lies in the plane $z_1=0$ or $z_0=\mu z_1$. In the first case, the image of $f$ also lies in $z_2^{d-2}(z_2^2+a_1z_0^2)+z_3^{d-2}(a_3z_2^2+z_3^2)=0$. In inhomogeneous coordinates $(X,Y)$, this curve is given by the equation $X^{d-2}(X^2+a_1Y^2)+a_3X^2+1=0$. For all but except  finite choices of $a_1,a_3$, this curve is smooth and we can use the genus formula to conclude. In the second case where $f(\mathbb{C})$ lies in the plane $z_0=\mu z_1$, it must also lie in the curve $z_2^{d-2}(z_2^2+a_1z_0^2)+z_3^{d-2}(-a_0a_2z_0^2+a_3z_2^2+z_3^2)=0$. Again, for all but except finite choices of $a_i$, this curve is smooth and we get the constancy of $f$.

Finally, consider the case (a2iii). The image of $f$ lies in the plane $z_1=0$ or $z_0=\mu z_1$. Furthermore, it lies also in the plane curve $z_0^d+z_3^{d-2}(\alpha z_0^2+a_3z_2^2+z_3^3)=0$, where $\alpha\in\mathbb{C}$ is a constant. This curve is given in the inhomogeneous coordinate $(X,Y)$ as $X^d+\alpha X^2+a_3 Y^2+1=0$, which is of genus at least $2$, which implies that $f$ is constant.

We now move to the case (a3). As before, we have three sub-cases:
\begin{enumerate}
\item[(a3.i)] The image of $f$ lies in the surface  $z_0^d+z_1^{d-2}(z_1^2+a_0z_0^2)=0$ and avoids the surface $z_2^{d-2}(z_2^2+a_1z_0^2)=0$;
\item[(a3.i')]  The image of $f$ lies in the surface  $z_0^d+z_2^{d-2}(z_2^2+a_1z_0^2)=0$ and avoids the surface $z_1^{d-2}(z_1^2+a_0z_0^2)=0$;
\item[(a3.ii)] The image of $f$ lies in the surface  $z_1^{d-2}(z_1^2+a_0z_0^2)+z_2^{d-2}(z_2^2+a_1z_0^2)=0$ and avoids the surface $z_0=0$.
\end{enumerate} 

In the case (a3.i) and (a3.i'), the image of $f$ lies in a projective line or conic and avoids three points, and hence it must be constant. In the case (a3.ii), we employ Proposition~\ref{genus computation Xd-2,2 Yd-2,2} to conclude.

Now we pass to the case (b). By the generalized Borel Theorem, we need to consider the following circumstances:
\begin{enumerate}
	\item[(b1)] The image of $f$ lies in the surfaces $z_0^{d}+z_1^{d-2}(z_1^2+a_0z_0^2)=0$, $z_2^{d-2}(z_2^2+a_1z_0^2)+\alpha z_3^{d-2}(a_2z_1^2+a_3z_2^2+z_3^2)=0$ for $\alpha\not=1$ and avoids the surface $z_2^{d-2}(z_2^2+a_1z_0^2)=0$;
	\item[(b2)] The image of $f$ lies in the surfaces $z_0^{d}+\alpha z_1^{d-2}(z_1^2+a_0z_0^2)=0$, $z_2^{d-2}(z_2^2+a_1z_0^2)+ z_3^{d-2}(a_2z_1^2+a_3z_2^2+z_3^2)=0$ for $\alpha\not=1$ and avoids the surface $z_0^{d}+z_1^{d-2}(z_1^2+a_0z_0^2)=0$;
	\item[(b1')] The image of $f$ lies in the surfaces $z_0^{d}+z_2^{d-2}(z_2^2+a_1z_0^2)=0$, $z_1^{d-2}(z_1^2+a_0z_0^2)+\alpha z_3^{d-2}(a_2z_1^2+a_3z_2^2+z_3^2)=0$ for $\alpha\not=1$ and avoids the surface $z_1^{d-2}(z_1^2+a_0z_0^2)=0$;
   \item[(b2')] The image of $f$ lies in the surfaces $z_0^{d}+\alpha z_2^{d-2}(z_2^2+a_1z_0^2)=0$, $z_1^{d-2}(z_1^2+a_0z_0^2)+z_3^{d-2}(a_2z_1^2+a_3z_2^2+z_3^2)=0$ for $\alpha\not=1$ and avoids the surface $z_0^{d}+z_2^{d-2}(z_2^2+a_1z_0^2)=0$;
	\item[(b3)] The image of $f$ lies in the surfaces $z_0^d+  z_3^{d-2}(a_2z_1^2+a_3z_2^2+z_3^2)=0$, $z_1^{d-2}(z_1^2+a_0z_0^2)+\alpha z_2^{d-2}(z_2^2+a_1z_0^2)=0$ for $\alpha\not=1$ and avoids the surface $z_1^{d-2}(z_1^2+a_0z_0^2)=0$;
	\item[(b4)] The image of $f$ lies in the surfaces $z_0^d+  \alpha z_3^{d-2}(a_2z_1^2+a_3z_2^2+z_3^2)=0$, $z_1^{d-2}(z_1^2+a_0z_0^2)+ z_2^{d-2}(z_2^2+a_1z_0^2)=0$ for $\alpha\not=1$ and avoids the surface $z_3^{d-2}(a_2z_1^2+a_3z_2^2+z_3^2)=0$;
	\item[(b3')] The image of $f$ lies in the surfaces $z_0^d+ \alpha z_3^{d-2}(a_2z_1^2+a_3z_2^2+z_3^2)=0$, $z_1^{d-2}(z_1^2+a_0z_0^2)+ z_2^{d-2}(z_2^2+a_1z_0^2)=0$ for $\alpha\not=1$ and avoids the surface $z_3^{d-2}(a_2z_1^2+a_3z_2^2+z_3^2)=0$;
	\item[(b4')]The image of $f$ lies in the surfaces $z_0^d+  z_3^{d-2}(a_2z_1^2+a_3z_2^2+z_3^2)=0$, $z_1^{d-2}(z_1^2+a_0z_0^2)+\alpha z_2^{d-2}(z_2^2+a_1z_0^2)=0$ for $\alpha\not=1$ and avoids the surface $z_1^{d-2}(z_1^2+a_0z_0^2)=0$;
	\item[(b5)] The image of $f$ lies in the curve $z_0^{d}=\alpha z_1^{d-2}(z_1^2+a_0z_0^2)=\beta z_2^{d-2}(z_2^2+a_1z_0^2)=0$, where $\alpha,\beta \not=0, 1+\alpha^{-1}+\beta^{-1}=0$ and avoids $z_3^{d-2}(a_2z_1^2+a_3z_2^2+z_3^2)=0$;
	\item[(b6)] The image of $f$ lies in the curve $z_0^{d}=\alpha z_1^{d-2}(z_1^2+a_0z_0^2)=\beta z_3^{d-2}(a_2z_1^2+a_3z_2^2+z_3^2)$, where $\alpha,\beta \not=0, 1+\alpha^{-1}+\beta^{-1}=0$ and avoids $z_2^{d-2}(z_2^2+a_1z_0^2)=0$;
	\item[(b6')] The image of $f$ lies in the curve $z_0^{d}=\alpha z_2^{d-2}(z_2^2+a_1z_0^2)=\beta z_3^{d-2}(a_2z_1^2+a_3z_2^2+z_3^2)$, where $\alpha,\beta \not=0, 1+\alpha^{-1}+\beta^{-1}=0$ and avoids $z_1^{d-2}(z_1^2+a_0z_0^2)=0$;
	\item[(b7)] The image of $f$ lies in the curve $z_1^{d-2}(z_1^2+a_0z_0^2)=\alpha z_2^{d-2}(z_2^2+a_1z_0^2)=\beta z_3^{d-2}(a_2z_1^2+a_3z_2^2+z_3^2)$, where $\alpha,\beta \not=0, 1+\alpha^{-1}+\beta^{-1}=0$ and avoids $z_0=0$.
	\end{enumerate}
By symmetry, we only need to consider the case (bi). Details of case-by-case arguments are given belows.
\begin{enumerate}
\item[$\bullet$] In the case (b1),  the curve $f$ must lie in the plane  $z_1=\lambda z_0$, where $\lambda$ depending only on $a_0$. Hence $f(\mathbb{C})$ must lie in the plane curve $z_2^{d-2}(z_2^2+a_1z_0^2)+\alpha z_3^{d-2}(\mu z_0^2+a_3z_2^2+z_3^2)=0$, where $\mu=\lambda^2a_2$. In the inhomogeneous coordinates $(X,Y)$, this curve is given by the algebraic equation $X^{d-2}(X^2+a_1 Y^2)+\alpha(\mu Y^2+a_3X^2+1)=0$. A straightforwards computation shows that this curve has at most four ordinary singularities. Hence this curve is irreducible and has genus at least $2$, which yields the constancy of $f$.
\item[$\bullet$] The case (b2) can be treated similar as in the case (b1).
\item[$\bullet$] In the case (b3), applying Proposition~\ref{genus computation Xd-2,2 Yd-2,2}, the curve $z_1^{d-2}(z_1^2+a_0z_0^2)+\alpha z_2^{d-2}(z_2^2+a_1z_0^2)=0$ is either irreducible and has genus at least $2$, or it has irreducible components with lines of the form $z_1+\lambda z_3=0$ and an irreducible component of genus at least $2$. Therefore we can suppose that the image of $f$ lies in a line of the form $z_1+\lambda z_3=0$. This implies that $f(\mathbb{C})$ is contained in the plane curve $z_0^d+ z_3^{d-2}(\mu z_1^2+z_3^2)=0$. This curve in the inhomogeneous coordinates is given as $X^d+\mu Y^2+1=0$.
\item[$\bullet$] In the case (b4), we can apply directly Proposition~\ref{genus computation Xd-2,2 Yd-2,2} to get conclusion.
\item[$\bullet$] In the case (b5), the image of $f$ lies in a line and omits three distinct points there. The constancy of $f$ follows from the Little Picard Theorem.
\item[$\bullet$] In the case (b6), the image of $f$ lies in the plane $z_1=\lambda z_0$ and the algebraic curve $z_0^d=\beta z_3^{d-2}(\mu z_0^2+a_3z^2+z_3^2)=0$, or $X^d=\beta (\mu X^2+a_3 Y^2+a_3)=0$ in the inhomogeneous coordinates $(X,Y)$. This curve is irreducible and has at most two ordinary singularities, and hence its genus is at least $2$, which implies the constancy of $f$.
\item[$\bullet$] In the case (b7), the image of $f$ lies in a plane of the form $z_1=\lambda z_2$ for some constant $\lambda$. Therefore it lies also in the plane curve $z_1^{d-2}(z_1^2+a_0z_0^2)=\beta z_3^{d-2}(\mu z_1^2+z_3^2)$, or $X^{d-2}(X^2+a_0Y^2)-\beta (\mu X^2+1)=0$ in the inhomogeneous coordinates $(X,Y)$. Straightforward computation shows that this curve is irreducible and has genus at least $2$, where concludes the constancy of $f$.
\end{enumerate}
\begin{rem}
Similar constructions in the compact case were gave in \cite{Elgoul96}, \cite{siu_yeung1997}. The hyperbolic surfaces were constructed in these works are given by algebraic equations of the form
\[
z_0^{d-2}(z_0^2+\epsilon_1^2z_1^d+\epsilon_2^2z_2^2)+z_1^d+z_2^d+z_3^d=0,
\]
where $\epsilon_1,\epsilon_2$ are suitable constants.
Note however that the complement of these surfaces are never hyperbolic. Indeed, the complement of such surface contains the image of the holomorphic curve $f\colon\mathbb{C}\rightarrow\mathbb{CP}^3,z\mapsto [0:e^{\varphi}:\psi:\epsilon\psi]$, where $\varphi,\psi$ are entire functions and $\epsilon$ is a $d$--root of $-1$.
\end{rem}
\section{Holomorphic curves intersecting Fermat-Warning type hypersurface}
\subsection{Counting dimension of some subvarieties of Grassmanians}
Let $\Gr_{m,k}$ denote the Grassmannian of complex codimension $k$ subspaces of $\mathbb{C}^m$. Let $Q_{m,k}$ denote the subspace $0_k\times\mathbb{C}^{m-k}\subset\Gr_{m,k}$. For integers $a,b,c$ with $1\leq a\leq c\leq m-1$ and $1\leq b \leq c\leq a+b$, consider the set
$$
\Gamma_{m,a,b,c}:=\{V\in \Gr_{m,a}:\dim V \cap Q_{m,c}\geq m-c\}.
$$
The following counting dimension result is due to Shiffman-Zaidenberg \cite{shiffman_zaidenberg2002_pn}.
\begin{lem}
\label{counting dimension grassmanian}
	\[
	\dim \Gr_{m,a}-\dim\Gamma_{m,a,b,c}=(m-c)(a+b-c).
	\]	
\end{lem}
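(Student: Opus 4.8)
The plan is to realize $\Gamma_{m,a,b,c}$ as a special Schubert variety and to read off its dimension from an incidence correspondence. Write $W := Q_{m,b}$ for the relevant fixed coordinate subspace (of dimension $m-b$) and $\ell := m-c$ for the prescribed intersection dimension, so that a subspace $V\in\Gr_{m,a}$, which has dimension $m-a$, belongs to $\Gamma_{m,a,b,c}$ exactly when $\dim(V\cap W)\ge\ell$. The hypotheses fit this picture precisely: $a\le c$ and $b\le c$ force $\ell\le\min(\dim V,\dim W)$, so the condition is not vacuous, while $c\le a+b$ says exactly that $\ell$ exceeds the generic intersection dimension $\dim V+\dim W-m=m-a-b$. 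This excess is what makes $\Gamma_{m,a,b,c}$ a proper subvariety and governs its codimension.

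First I would introduce the incidence variety
\[
\Sigma:=\{(V,U):V\in\Gr_{m,a},\ U\in\Gr(\ell,W),\ U\subseteq V\}
\]
with its two projections $\pi_1\colon\Sigma\to\Gr(\ell,W)$ and $\pi_2\colon\Sigma\to\Gamma_{m,a,b,c}$. The projection $\pi_1$ is the computational engine: its fibre over a fixed $U$ consists of all codimension-$a$ subspaces of $\mathbb{C}^m$ containing $U$, which is canonically the Grassmannian $\Gr_{m-\ell,a}$ after passing to $\mathbb{C}^m/U$, of dimension $a(m-\ell-a)$. Thus $\pi_1$ exhibits $\Sigma$ as a Grassmann bundle over the irreducible base $\Gr(\ell,W)$; in particular $\Sigma$ is irreducible and
\[
\dim\Sigma=\ell(m-b-\ell)+a(m-\ell-a).
\]

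Next I would transfer this to $\Gamma_{m,a,b,c}$ through $\pi_2$. The fibre of $\pi_2$ over $V$ is $\Gr(\ell,V\cap W)$, of dimension $\ell(\dim(V\cap W)-\ell)$, which vanishes precisely when $\dim(V\cap W)=\ell$. To conclude that the generic fibre is a single point — equivalently that the stratum $\{\dim(V\cap W)=\ell\}$ is dense — I would exhibit one subspace $V_0$ with $\dim(V_0\cap W)=\ell$ exactly; such a $V_0$ exists because the requirement $\dim(V_0+W)=(m-a)+(m-b)-\ell\le m$ is solvable precisely under the hypothesis $c\le a+b$. Irreducibility of $\Sigma$ together with upper semicontinuity of fibre dimension then forces the generic fibre of $\pi_2$ to be $0$-dimensional, so that $\dim\Gamma_{m,a,b,c}=\dim\Sigma$, and the identity follows from
\[
\dim\Gr_{m,a}-\dim\Sigma=a(m-a)-\ell(m-b-\ell)-a(m-\ell-a)=a\ell-\ell(m-b-\ell)=(m-c)(a+b-c),
\]
upon substituting $\ell=m-c$.

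The step I expect to be the genuine obstacle is the transversality assertion underlying $\dim\Gamma_{m,a,b,c}=\dim\Sigma$: the Schubert count is valid only on the open stratum where $V$ meets $W$ in exactly the prescribed dimension, so the crux is to show this stratum is nonempty and dense, i.e.\ that $\pi_2$ is generically finite. This is the one place where all three inequalities genuinely enter — $a,b\le c$ to keep the auxiliary Grassmannians $\Gr(\ell,W)$ and $\Gr_{m-\ell,a}$ nonempty, and $c\le a+b$ to produce the transverse representative $V_0$ — and everything else is bookkeeping. If one prefers to bypass the incidence variety, the same count can be done in an affine chart of $\Gr_{m,a}$, where $\Gamma_{m,a,b,c}$ becomes the determinantal locus $\{\rank C\le\rho\}$ with $\rho=(m-b)-\ell$ for a matrix $C$ of linear chart coordinates; its codimension $(a-\rho)(m-b-\rho)$ again equals $(m-c)(a+b-c)$, but the identification of this determinantal codimension with the actual codimension of $\Gamma_{m,a,b,c}$ is precisely the same transversality point reappearing.
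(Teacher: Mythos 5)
Your proof is correct, and it is complete in the one place where care is genuinely needed. Note first that the paper itself gives no proof of this lemma: it is quoted from Shiffman--Zaidenberg, so there is nothing internal to compare against; your argument is essentially the standard incidence-correspondence proof that their paper uses. Two remarks. First, you have (correctly, and silently) repaired a typo in the statement as printed here: the displayed definition of $\Gamma_{m,a,b,c}$ involves $Q_{m,c}$, under which the condition $\dim V\cap Q_{m,c}\ge m-c$ would say $Q_{m,c}\subseteq V$ and the codimension would be $a(m-c)$, not $(m-c)(a+b-c)$; the intended (and original Shiffman--Zaidenberg) definition uses $Q_{m,b}$, which is exactly the $W$ you work with, and only then do all four inequalities $a\le c$, $b\le c$, $c\le a+b$ play the roles you assign them. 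Second, your treatment of the crux --- that $\pi_2$ is generically finite --- is right: irreducibility of $\Sigma$ as a Grassmann bundle via $\pi_1$, plus the explicit transverse representative $V_0=U\oplus V_1$ with $V_1\cap W=0$ (possible exactly when $c\le a+b$), plus semicontinuity of fibre dimension, gives $\dim\Gamma_{m,a,b,c}=\dim\Sigma$, and the arithmetic
\[
a(m-a)-\ell(m-b-\ell)-a(m-\ell-a)=\ell(a+b-m+\ell)=(m-c)(a+b-c)
\]
checks out. The closing determinantal remark is also consistent (your $\rho=(m-b)-\ell$ corresponds to the rank of the projection $W\to\mathbb{C}^m/V$), though as you say it merely relocates the same transversality issue.
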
 
\subsection{Second Main Theorem for nonconstant holomorphic curves into projective and a Fermat-Warning type hypersurface}
Let us now enter the proof of Theorem B. Let $[z_0:\dots:z_n]$, $[w_1:\dots:w_m]$ be homogeneous coordinates of $\mathbb{CP}^n$ and $\mathbb{CP}^{m-1}$, respectively. Let $[f_0:f_1:\dots:f_n]$ be a reduced representation of $f$. Consider the morphism $$\pi\colon\mathbb{C}\mathbb{P}^n\longmapsto\mathbb{C}\mathbb{P}^{m-1},\qquad [z_0:z_1:\dots:z_n]\longmapsto [h_1^d:h_2^d:\dots:h_m^d],
$$
and put $g=\pi\circ f\colon\mathbb{C}\longmapsto \mathbb{P}^{m-1}, z\longmapsto [g_1^d:\dots:g_m^d]$, where $g_i:=h_i\circ f$. It follows from the definition that
\begin{equation}
\label{order function of f and g}
T_g(r)=d\,T_f(r)+O(1).
\end{equation}
Let $\{H_i\}_{1\leq i\leq m+1}$ be the family of $m+1$ hyperplanes in general position in $\mathbb{CP}^{m-1}$ given by
\begin{align*}
H_i&=\{w_i=0\}&(1\leq i\leq m),\\
H_{m+1}&=\{\sum_{j=1}^{m}w_j=0\}.&\quad
\end{align*}

Set $I_0:=\{i\in\mathbb{N}: 1\leq i\leq m,g_i\equiv0\}$ and assume that $\ell=|I_0|$. Then $0\leq \ell\leq n-1$ and the image of $g$ lies in the subspace $H=\cap_{i\in I_0}H_i\cong\mathbb{CP}^{m-1-\ell}$. Denote by $J=\{1\leq j\leq m\}\setminus I_0$, and consider the map $$
\widetilde{g}\colon \mathbb{C}\mapsto H\cong \mathbb{CP}^{m-1-\ell},\qquad z\rightarrow [g_j^d(z)]_{j\in J},
$$
we then still have
\begin{equation}
\label{order function of widetilde f and g}
T_{\widetilde{g}}(r)=d\,T_f(r)+O(1).
\end{equation}

If $\widetilde{g}$ is linearly nondegenerate, then by  applying Cartan's Second Main Theorem for the collection of hyperplanes $\{\widetilde{H_j}\}_{j\in J\cup\{m+1\}}$,
where $\widetilde{H_j}=H_j\cap H$, one obtains
\begin{equation}
\label{using cartan smt for g}
T_{\widetilde{g}}(r)
\leq
\sum_{j\in J}N_{\widetilde{g}}^{[m-1-\ell]}(r,\widetilde{H_j})
+
N_{\widetilde{g}}^{[m-1-\ell]}(r,\widetilde{H_{m+1}})
+
S_{\widetilde{g}}(r).
\end{equation}
For each $j\in J$, since the multiplicity of $\widetilde{g}^*\widetilde{H_j}$ is at least $d$ at every point on its support, one has
\[
N_{\widetilde{g}}^{[m-1-\ell]}(r,\widetilde{H_j})\leq\bigg(\dfrac{m-1-\ell}{d}\bigg)N_{\widetilde{g}}(r,\widetilde{H_i}),
\]
which implies
\[
N_{\widetilde{g}}^{[m-1-\ell]}(r,\widetilde{H_j})\leq\bigg(\dfrac{m-1-\ell}{d}\bigg)T_{\widetilde{g}}(r)+O(1),
\]
by the First Main Theorem. Furthermore,  it is clear from the definition that 
$$
N_{\widetilde{g}}^{[m-1-\ell]}(r,\widetilde{H_{m+1}})
=
N_f^{[m-1-\ell]}(r,D).
$$
Thus it follows from \eqref{order function of widetilde f and g}, \eqref{using cartan smt for g} that
\begin{equation}
\label{smt after I0}
\big(d-(m-\ell)(m-\ell-1)\big)T_f(r)
\leq
N_f^{[m-1-\ell]}(r,D)+S_f(r),
\end{equation}
which implies the desired estimate of the Main Theorem. 

Next, consider the case where the image of $g$ lies in some hyperplane  of the subspace $H\cong\mathbb{CP}^{m-1-\ell}$. Suppose that the holomorphic functions $g_j,j\in J$  satisfy the equation
\begin{equation}
\label{relation in K}
\sum_{k\in K}a_kg_k^d=0,
\end{equation}
where $K\subset J$ is some subset of $J$ with $|K|\geq 2$ and where $a_k$ are nonzero complex numbers.
We first prove the following
\begin{claim}
	There exist some indexes $i\not=j$ with $i,j\in J$ such that $g_i/g_j$ is constant.
\end{claim}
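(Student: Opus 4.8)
The plan is to argue by induction on the cardinality $|K|$, imitating the truncation argument in the proof of Proposition~\ref{generalized borel}. First I would normalize the relation: choosing $d$-th roots and setting $h_k:=a_k^{1/d}\,g_k$, the relation \eqref{relation in K} becomes $\sum_{k\in K}h_k^d=0$; since $h_i/h_j$ and $g_i/g_j$ differ only by a constant, it suffices to produce two indices with $h_i/h_j$ constant. The base case $|K|=2$ is immediate: from $h_{k_1}^d+h_{k_2}^d=0$ one gets $(h_{k_1}/h_{k_2})^d=-1$, so the holomorphic function $h_{k_1}/h_{k_2}$ has constant $d$-th power and is therefore itself constant.

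For the inductive step $|K|\geq 3$, I would form the auxiliary curve $\psi_K:=[h_k^d]_{k\in K}\colon\mathbb{C}\to\mathbb{CP}^{|K|-1}$, whose image lands in the hyperplane $H_K=\{\sum_{k\in K}w_k=0\}\cong\mathbb{CP}^{|K|-2}$. The $|K|$ coordinate hyperplanes $\widetilde H_k=\{w_k=0\}\cap H_K$ are in general position inside $H_K$: the point common to any $|K|-1$ of the $\{w_k=0\}$ has a single nonzero coordinate and hence fails the equation $\sum w_k=0$, so these hyperplanes have empty common intersection in $H_K$. I would then split according to the linear (non)degeneracy of $\psi_K$ in $H_K$.

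If $\psi_K$ is linearly nondegenerate, I would apply Cartan's Second Main Theorem to $\psi_K$ and $\{\widetilde H_k\}_{k\in K}$, obtaining $T_{\psi_K}(r)\leq\sum_{k\in K}N_{\psi_K}^{[|K|-2]}(r,\widetilde H_k)+S_{\psi_K}(r)$, since the Cartan coefficient is $|K|-(|K|-2)-1=1$. The key point, exactly as in Proposition~\ref{generalized borel}, is that the $k$-th component $h_k^d$ is a $d$-th power, so at every point the intersection multiplicity of $\psi_K$ with $\widetilde H_k$ is a multiple of $d$; hence
\[
N_{\psi_K}^{[|K|-2]}(r,\widetilde H_k)\leq\tfrac{|K|-2}{d}\,N_{\psi_K}(r,\widetilde H_k)\leq\tfrac{|K|-2}{d}\,T_{\psi_K}(r)+O(1)
\]
by the First Main Theorem. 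Summing over the $|K|$ hyperplanes gives $\bigl(1-\tfrac{|K|(|K|-2)}{d}\bigr)T_{\psi_K}(r)\leq S_{\psi_K}(r)$. Because $|K|\leq m$ and $d\geq m^2-m+1>m(m-2)\geq|K|(|K|-2)$, the coefficient is strictly positive, forcing $T_{\psi_K}(r)\leq S_{\psi_K}(r)$; this contradicts the nonconstancy of a linearly nondegenerate curve into $\mathbb{CP}^{|K|-2}$ (here $|K|-2\geq1$). Thus $\psi_K$ must be linearly degenerate.

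In the degenerate case, the image of $\psi_K$ lies in some hyperplane of $H_K$, so the functions $\{h_k^d\}_{k\in K}$ satisfy a linear relation independent of $\sum_{k\in K}h_k^d=0$, whence the space of such relations is at least two-dimensional. Fixing any index $k_0\in K$, a one-dimensional linear-algebra computation in this relation space produces a nontrivial relation $\sum_{k\in K}b_k h_k^d=0$ with $b_{k_0}=0$, whose support is therefore a proper subset of $K$. Moreover, every nontrivial relation among the $h_k^d$ has support of cardinality at least $2$, since a single-term relation would force some $h_k\equiv0$ and hence $g_k\equiv0$, contradicting $k\in J$. We thereby obtain a relation of the form \eqref{relation in K} on an index set $K'$ with $2\leq|K'|<|K|$, and the induction hypothesis concludes. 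The step I expect to require the most care is this degenerate case, namely certifying that the reduced relation keeps support between $2$ and $|K|-1$, together with the clean verification that every $\widetilde H_k$-intersection multiplicity is divisible by $d$, as it is precisely this divisibility that shrinks the truncated counting functions enough for the bound $d\geq m^2-m+1$ to take effect.
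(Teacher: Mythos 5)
Your proposal is correct and follows essentially the same route as the paper: induction on $|K|$, with the nondegenerate case ruled out by Cartan's Second Main Theorem via the divisibility-by-$d$ of the intersection multiplicities (the paper phrases this as the ``ramification theorem,'' yielding the same inequality $\gamma\bigl(1-\tfrac{\gamma-2}{d}\bigr)\leq\gamma-1$), and the degenerate case handled by passing to a relation of strictly smaller support. Your write-up is in fact somewhat more careful than the paper's, which compresses the degenerate-case reduction into ``Inductively, the claim is proved.''
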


Indeed, if $|K|=2$, the claim follows directly from \eqref{relation in K}. Suppose $|K|=\gamma\geq3$. Consider the homogeneous coordinates $[w_k]_{k\in K}$ of the linear subspace $\mathbb{C}\mathbb{P}^{\gamma-1}$. Similar as in above, consider the hyperplane $H_K\cong \mathbb{C}\mathbb{P}^{\gamma-2}\subset \mathbb{C}\mathbb{P}^{\gamma-1}$ defined as $\sum_{k\in I}a_kw_k=0$ and the holomorphic map 
$$
\mathsf{g}_{K}\colon\mathbb{C}
\mapsto
H_K\cong \mathbb{C}\mathbb{P}^{\gamma-2},\qquad \mathsf{g}_K(z)=[g_k^d(z)]_{k\in K}.
$$

If the image of $\mathsf{g}_{K}$ doesn't lie in some smaller linear subspace, then using ramification theorem for $\mathsf{g}_{K}$ and the family of $\gamma$ hyperplanes $\{H_k\}_{k\in K}$ in $H_K\cong \mathbb{C}\mathbb{P}^{\gamma-2}$, one obtains 
\begin{equation}
\label{applying ramification for gI}
\sum_{k\in K}\bigg(1-\dfrac{\gamma-2}{d}\bigg)\leq \gamma-1,
\end{equation}
contradiction. Thus $\mathsf{g}_{K}$ is linearly degenerate. Inductively, the claim is proved.

Going back to the proof of the Theorem, we now follow the arguments in \cite[Example 3.10.21]{Kobayashi1998}. Let $\sim$ be the equivalence relation on the index set $J$ defined as $i\sim j$ if and only if $g_i/g_j$ is constant and let $\mathcal{I}:=\{I_1,\dots, I_r\}$ be the partition of $J$ by $\sim$. For each $1\leq s\leq r$, suppose that $|I_s|=\kappa_s$. For convenient, we put $|I_0|=\ell=\kappa_s$ and we write $I_s=\{i_{s,1},\dots,i_{s,\kappa_s}\}$. Then 
\[
g_{i_{s,j}}=\mu_{s,j}g_{i_{s,1}}\eqno (\forall\, 1\,\leq\,s\,\leq\,r,\,\forall\, 2\,\leq\,j\,\leq\,\kappa_s),
\]
for some constants $\mu_{s,j}\in\mathbb{C}$.
we pick an index $i_s\in I_s$ and put
$g_j=\ell_jg_{i_s}$ for each $j\in I_s$. Set $b_s=1+\sum_{j=2}^{\kappa_s}\mu_{s,j}^d (1\leq s\leq r)$, then
\[
\sum_{j\in J}g_j^d=\sum_{s=1}^rb_sg_{i_{s,1}}^d.
\]

Now, put $M=\{s:1\leq s\leq r, b_s\not=0\}$. Consider the case where $|M|\geq n+1$. Similar as in above, one considers the map $$
\mathsf{g}_{M}\colon\mathbb{C}
\mapsto
H_M\cong \mathbb{C}\mathbb{P}^{|M|-1},\qquad \mathsf{g}_M(z)=[g_{i_{s,1}}^d(z)]_{s\in M}.
$$
Since $|M|\geq n+1$, one has $T_{\mathsf{g}_{M}}(r)=d\,T_f(r)$. If $\mathsf{g}_{M}$ is linearly degenerate, by the above claim, one has $g_i/g_j$ is constant for some $i,j\in M$, a contradiction. Hence $\mathsf{g}_{M}$ is linearly nondegenerate, and
by the same arguments as in \eqref{smt after I0}, one can use Cartan'Second Main Theorem for $\mathsf{g}_{M}$ and for the $|M|+1$ hyperplanes $\{w_s=0\} (s\in M), \{\sum_{s\in M}b_s w_s=0\}$, and one gets the desired estimate.

Now, consider the case where $|M|\leq n$. For each $s\notin M$, the set $I_s$ contains at least $2$ indexes. The image of $g$ lies in the $r$-plane $\mathcal{Y}_{\mu}^{\mathcal{I}}$ given by equations
\[
w_{i_{s,j}}=\mu_{s,j}w_{i_{s,1}}\quad (1\,\leq\,s\,\leq\,r,\, 2\,\leq\,j\,\leq\,\kappa_s); \qquad w_{i_{0,j}}=0\quad (1\,\leq\,j\,\leq\,\kappa_0),
\]
where $\mathcal{I}=\{I_1,\dots,I_r\}, \mu=\mu_{s,j}$. Let $\Gr_{m,k}$ denote the Grassmannian of complex codimension $k$ linear subspace of $\mathbb{C}^m$. We need to check that for generic $V\in \Gr_{m,m-n-1}$, the intersection $\mathcal{Y}_{\mu}^{\mathcal{I}}\cap \mathbb{P}(V)$ is at most a point for all such above $(\mathcal{I},\mu)$. Indeed, by Lemma~\ref{counting dimension grassmanian}, $\mathcal{Y}_{\mu}^{\mathcal{I}}\cap \mathbb{P}(V)$ is either a point or empty, unless $V$ lies in a subvariety of $\Gr_{m,m-n-1}$ of codimension
\[
\alpha=2(m-n-r+1).
\]

On the other hand, since $b_s=1+\sum_{j=2}^{\kappa_s}\mu_{s,j}^d=0$ for any $s\not\in M$, the $\mu$-moduli space of the lifting $\widetilde{\mathcal{Y}}_{\mu}^{\mathcal{I}}$ of $\mathcal{Y}_{\mu}^{\mathcal{I}}$ to $\mathbb{C}^m$ in $\Gr_{m,m-n-1}$ has dimension
\[
\beta\leq
\sum_{s\in M}(\kappa_s-1)
+
\sum_{1\,\leq\, s\leq\, r,\,s\,\not\in\, M}(\kappa_s-2)
=\sum_{s=1}^{r}(\kappa_s-2)+|M|
=m-\kappa_0-2r+|M|
\leq
m-2r+n.
\]

Since $m\geq 3n-1$, one has $\alpha>\beta$. Therefore, for generic $V\in \Gr_{m,m-n-1}$, the intersection $\mathcal{Y}_{\mu}^{\mathcal{I}}\cap \mathbb{P}(V)$ is  a point or empty for all $(\mathcal{I},\mu)$. Hence $g$ is constant and so is $f$, a contradiction. This finishes the proof of Theorem B.
\begin{rem}
	To get the hyperbolicity result of the complement, one only needs the condition $m\geq 2n$. However, to get Second Main Theorem estimate, in the above proof, we need the comparability between $T_{\mathsf{g}_M}(r)$ and $T_f(r)$. For this reason, we must put stronger assumption $m\geq 3n-1$.
\end{rem}

\begin{center}
	\bibliographystyle{plain}
	\bibliography{references}
\end{center}
\address
\end{document}